\newtheoremstyle{slanted}
{3pt}
{3pt}
{\slshape}
{}
{\bfseries}
{.}
{.5em}
{}
\theoremstyle{slanted}
\newtheorem{theorem}{Theorem}[section]
\newtheorem{corollary}[theorem]{Corollary}
\newtheorem{definition}[theorem]{Definition}
\theoremstyle{remark}
\setlist[itemize]{leftmargin=*}
\setlist[enumerate]{leftmargin=*}
\begin{document}

\date{}
\author{Kevin Chang}
\title[Upper Bounds for Ordered Ramsey Numbers of Small 1-Orderings]{Upper Bounds for Ordered Ramsey Numbers of Small 1-Orderings}
\maketitle
\begin{abstract}
A $k$-ordering of a graph $G$ assigns distinct order-labels from the set $\{1,\ldots,|G|\}$ to $k$ vertices in $G$. Given a $k$-ordering $H$, the ordered Ramsey number $R_<(H)$ is the minimum $n$ such that every edge-2-coloring of the complete graph on the vertex set $\{1, \ldots, n\}$ contains a copy of $H$, the $i$th smallest vertex of which either has order-label $i$ in $H$ or no order-label in $H$.

This paper conducts the first systematic study of ordered Ramsey numbers for $1$-orderings of small graphs. We provide upper bounds for $R_<(H)$ for each connected $1$-ordering $H$ on $4$ vertices. Additionally, for every $1$-ordering $H$ of the $n$-vertex path $P_n$, we prove that $R_<(H) \in O(n)$. Finally, we provide an upper bound for the generalized ordered Ramsey number $R_<(K_n, H)$ which can be applied to any $k$-ordering $H$ containing some vertex with order-label $1$.
\end{abstract}

\section{Introduction}\label{sec:intro}

Ramsey's theorem establishes that for any graph $H$ and any sufficiently large $r$, every red-blue coloring of the complete graph on $r$ vertices must contain a monochromatic copy of $H$. The smallest $r$ for which this is the case is referred to as the Ramsey number $R(H)$. Loosely speaking, classical Ramsey theory focuses on bounding $R(G)$ both for specific small graphs \cite{ChHa72i} and for infinite families of large graphs \cite{ChHa72iii}. Both directions of research lead to notoriously difficult problems, the most famous of which is the study of $R(K_n)$. For specific small $n$, $R(K_n)$ is known only up to $n = 4$, with the best known bounds of $R(K_5)$ placing the value between $43$ and $49$ inclusive \cite{Exoo89, McRa95}. For arbitrary $n$, despite some smaller order improvements \cite{Conlon09, Spencer75}, the standard bounds of $2^{\frac{n}{2}} \le R(K_n) \le 2^{2n}$ \cite{Erdos47, ErSz35} have remained largely unchanged for sixty years \cite{CLFS14}.

This paper concerns ordered graphs. An \emph{ordered graph} $H$ is a graph whose vertices are assigned distinct order-labels from $\{1,2,\ldots,|H|\}$. Similarly, an ordered 2-coloring of the complete graph $K_n$ is a copy of $K_n$ in which every edge has been assigned one of two colors and the vertices have been labeled with $\{1, 2, \ldots, n\}$. Given an ordered 2-coloring $G$ and an ordered graph $H$, we say that $G$ \emph{contains a monochromatic copy} of $H$ if $G$ contains a subgraph isomorphic to $H$ whose edges are a single color and whose vertices are in the same relative order as in $H$. 

Interestingly, Ramsey's theorem generalizes to the context of ordered graphs and 2-colorings. In particular, for any ordered graph $H$, and a sufficiently large $r$, every ordered 2-coloring of the complete graph $K_r$ on $r$ vertices must contain a monochromatic copy of $H$. In recent years, this has prompted the study of so-called \emph{ordered Ramsey numbers}. Generalizing the Ramsey number, $R_<(H)$, the ordered Ramsey number of an ordered graph $H$, is the minimum number $n$ such that every ordered red-blue coloring of $K_n$ contains an ordered monochromatic copy of $H$. 

Ordered Ramsey numbers are a relatively new field of study. One of the oldest results, found by Erd\H{o}s and Szekeres in a classic 1935 paper, states that the ordered Ramsey number of the standard ordering of the $n$-vertex path is $(n-1)^2 + 1$ \cite{ErSz35}. In 2014, Conlon, Fox, Lee, and Sudakov initiated the systematic study of ordered Ramsey numbers. One of their main results highlights a surprising connection between ordered Ramsey numbers and their counterparts. In particular, there exists a constant $c$ such that the ordered Ramsey number of a graph $H$ is bounded above by $R(H)^{c \log^2 n}$ \cite{CLFS14}.

At roughly the same time, Balko, Cibulka, Kr{\'a}l, and Kyn{\v{c}}l pursued a related direction of study \cite{BCKK13}. Among other results, they found that ordered Ramsey numbers of the same graph can grow superpolynomially in the size of the graph in one ordering while remaining linear in another ordering. Considering ordered graphs satisfying restrictions, they found that graphs of constant bandwidth, interval chromatic number, and degeneracy have ordered Ramsey number polynomial in the number of vertices. In addition to examining ordered Ramsey numbers of arbitrary ordered graphs, they bounded the ordered Ramsey numbers of specific families, including paths, cycles, and stars.

So far, the study of ordered Ramsey numbers has focused on bounding the ordered Ramsey numbers of large families of graphs. Just as for classical Ramsey Numbers \cite{ErSz35, Radziszowski14, McRa95, Spencer75}, another natural problem is to study the ordered Ramsey numbers of particular small ordered graphs. 

This paper initiates the first systematic study of ordered Ramsey numbers of small graphs. In order to focus on orderings of particular importance, we introduce the notion of a \emph{1-ordering} of a graph, in which a single vertex of the graph is assigned a value from $\{1, \ldots, n\}$ indicating its value relative to the other vertices. The main focus of this paper is to find upper bounds for $R_<(H)$ for every 1-ordering $H$ of every connected graph on four vertices. A summary of these bounds can be found in Figure \ref{figtable} on page \pageref{figtable}. Perhaps surprisingly, several of the upper bounds in Figure \ref{figtable} are equal to the corresponding (unordered) Ramsey number, and are thus necessarily tight. These include all $1$-orderings of the 4-path $P_4$, three of the 1-orderings of the 4-star, and one $1$-ordering of the 3-pan.

Our results introduce two proof techniques which will likely have applicability in future work on small graphs. The first technique bounds $R_<(H)$ by finding an unordered graph $H'$ for which any ordering contains $H$ as an ordered subgraph; this establishes that $R_<(H) \le R(H')$, allowing us to harness already known results on the classical Ramsey number $R(H')$. The second technique uses what we call \emph{two-vertex anchoring}, in which one analyzes 2-colorings from the perspective of two preselected \emph{anchor vertices}. Two-vertex anchoring can be viewed as an ordered extension of classical Ramsey theory arguments in which one analyzes 2-colorings based on the edges coming out of a single vertex.

Additionally, in Section \ref{sec:infinite}, we are able to extend several of our results to consider ordered Ramsey number of infinite families of 1-orderings. We show that every 1-ordering of the $n$-vertex path has ordered Ramsey number at most linear in $n$. Moreover, we provide an upper bound for (the generalized Ramsey number) $R_n(K_n, H)$ for all graphs $H$ containing a vertex with order-label $1$. In particular, for any graph $H$ for which vertex $v$ has order-label $1$, we bound $R_<(K_n, H)$ in terms of $R_<(K_m, H \setminus \{v\})$ for $m \in [1, n]$.

The rest of this paper is organized as follows. In Section \ref{sec:prelim}, we introduce conventions and notation which will be used throughout the paper. In Section \ref{sec:smallgraphs}, we introduce our main results, establishing nontrivial upper bounds for ordered Ramsey numbers of 1-orderings of 4-vertex connected graphs. Then, in Section \ref{sec:infinite}, we provide extensions of results from Section \ref{sec:smallgraphs} which apply to infinite families of ordered graphs. Finally, in Section \ref{sec:conc}, we discuss directions of future work, including discussion of lower bounds for ordered Ramsey numbers of 1-orderings.

\section{Preliminaries}\label{sec:prelim}

In this section, we present notation and conventions. We begin by defining a $k$-ordered graph, also called a $k$-ordering.

\begin{definition}
A \emph{$k$-ordering} of a graph $H$ assigns distinct \emph{order-labels} from the set \\$\{1,\ldots,|H|\}$ to $k$ vertices in $H$.
\end{definition}

This paper focuses primarily on $1$-orderings, leading us to introduce additional notation:

\begin{definition}
Given a graph $H$ and a vertex $v \in H$, the \emph{$(v, l)$-ordering} of $H$ for some $l$ between 1 and $|H|$, inclusive, is the $1$-ordering of $H$ in which $v$ is assigned order-label $l$. 
\end{definition}

Figure \ref{figvertexlabelings} contains each of the 4-vertex connected graphs, the graph's name, and canonical names of each of its vertices. It is important to note that, as a convention, our proofs will use these canonical names when referring to vertices in each of the graphs.

\begin{figure}
\begin{tabular}{c c c}
\begin{tikzpicture}
\tikzstyle{vertex}=[circle,fill=black!25,minimum size=15pt,inner sep=0pt];
\tikzstyle{edge} = [draw,line width = 2 pt];
\foreach \pos/\name in {{(0,0)/d_2},{(.5,1)/d_1},{(1,0)/d_3},{(.5,-1)/d_4}} \node[vertex] (\name) at \pos {$\name$};
\foreach \source/ \dest in {d_1/d_2,d_3/d_2,d_4/d_2,d_4/d_2,d_3/d_1,d_4/d_3} \path[edge][black](\source) -- node[] {$$} (\dest);
\end{tikzpicture} &

\begin{tikzpicture}
\tikzstyle{vertex}=[circle,fill=black!25,minimum size=15pt,inner sep=0pt];
\tikzstyle{edge} = [draw,line width = 2 pt];
\foreach \pos/\name in {{(0,0)/p_1},{(1,0)/p_2},{(2,0)/p_3},{(3,0)/p_4}} \node[vertex] (\name) at \pos {$\name$};
\foreach \source/ \dest in {p_2/p_1,p_3/p_2,p_4/p_3} \path[edge][black](\source) -- node[] {$$} (\dest);
\end{tikzpicture} &

\begin{tikzpicture}
\tikzstyle{vertex}=[circle,fill=black!25,minimum size=15pt,inner sep=0pt];
\tikzstyle{edge} = [draw,line width = 2 pt];
\foreach \pos/\name in {{(0,0)/e_1},{(1,0)/e_2},{(2,.7)/e_3},{(2,-.7)/e_4}} \node[vertex] (\name) at \pos {$\name$};
\foreach \source/ \dest in {e_2/e_1,e_3/e_2,e_4/e_2,e_4/e_3} \path[edge][black](\source) -- node[] {$$} (\dest);
\end{tikzpicture} \\

Diamond graph & 4-path ($P_4$) & 3-pan \\

\begin{tikzpicture}
\tikzstyle{vertex}=[circle,fill=black!25,minimum size=15pt,inner sep=0pt];
\tikzstyle{edge} = [draw,line width = 2 pt];
\foreach \pos/\name in {{(0,0)/s_1},{(1,0)/s_2},{(2,.7)/s_3},{(2,-.7)/s_4}} \node[vertex] (\name) at \pos {$\name$};
\foreach \source/ \dest in {s_2/s_1,s_3/s_2,s_4/s_2} \path[edge][black](\source) -- node[] {$$} (\dest);
\end{tikzpicture} &

\begin{tikzpicture}
\tikzstyle{vertex}=[circle,fill=black!25,minimum size=15pt,inner sep=0pt];
\tikzstyle{edge} = [draw,line width = 2 pt];
\foreach \pos/\name in {{(0,0)/c_4},{(1,0)/c_3},{(1,1)/c_2},{(0,1)/c_1}} \node[vertex] (\name) at \pos {$\name$};
\foreach \source/ \dest in {c_2/c_1,c_3/c_2,c_4/c_1,c_4/c_3} \path[edge][black](\source) -- node[] {$$} (\dest);
\end{tikzpicture} &

\begin{tikzpicture}
\tikzstyle{vertex}=[circle,fill=black!25,minimum size=15pt,inner sep=0pt];
\tikzstyle{edge} = [draw,line width = 2 pt];
\foreach \pos/\name in {{(0,0)/k_4},{(1,0)/k_3},{(1,1)/k_2},{(0,1)/k_1}} \node[vertex] (\name) at \pos {$\name$};
\foreach \source/ \dest in {k_2/k_1,k_3/k_1,k_3/k_2,k_4/k_1,k_4/k_2,k_4/k_3} \path[edge][black](\source) -- node[] {$$} (\dest);
\end{tikzpicture} \\

4-star & 4-cycle ($C_4$) & 4-vertex complete graph ($K_4$)
\end{tabular}
\caption{Canonical vertex-labelings for each connected 4-vertex graph.}
\label{figvertexlabelings}
\end{figure}
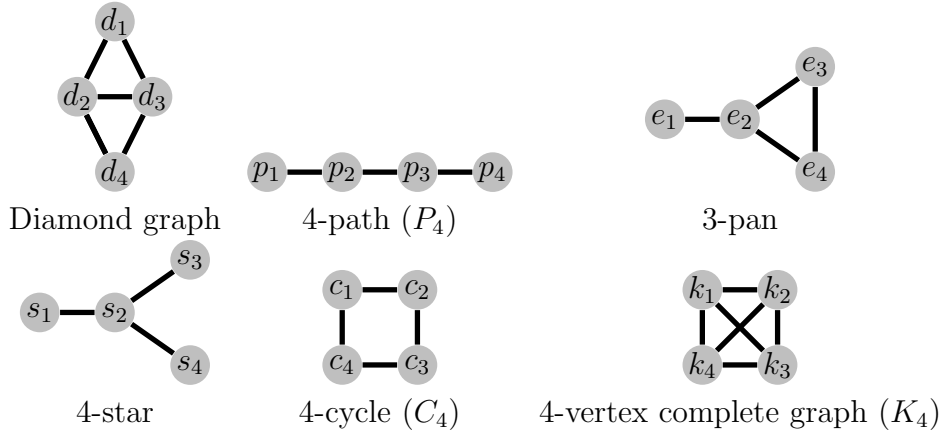

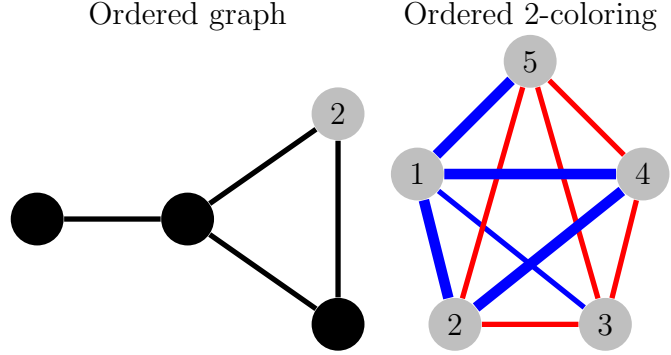
\begin{figure}
\tikzstyle{vertex}=[circle,fill=black!25,minimum size=20pt,inner sep=0pt]
\tikzstyle{edge} = [draw,line width = 2 pt]
\begin{tabular}[h]{c c}
Ordered graph & Ordered 2-coloring \\
\begin{tikzpicture}
\tikzstyle{vertex}=[circle,fill=black!25,minimum size=20pt,inner sep=0pt];
\tikzstyle{edge} = [draw,line width = 2 pt];
\foreach \pos/\name in {{(4,1.4)/2}} \node[vertex] (\name) at \pos {$\name$};
\tikzstyle{vertex}=[circle,fill=black,minimum size=20pt,inner sep=0pt];
\foreach \pos/\name in {{(0,0)/e_1},{(2,0)/e_2},{(4,-1.4)/e_4}} \node[vertex] (\name) at \pos {$\name$};
\foreach \source/ \dest in {e_2/e_1,2/e_2,e_4/e_2,e_4/2} \path[edge][black](\source) -- node[] {$$} (\dest);
\end{tikzpicture}
&
\begin{tikzpicture}
\foreach \pos/\name in {{(0,0)/2},{(-.5,2)/1},{(2.5,2)/4},{(2,0)/3},{(1,3.5)/5}}, \node[vertex] (\name) at \pos {$\name$};
\foreach \source/ \dest in {3/1} \path[edge][blue] (\source) -- node[] {$$} (\dest);
\foreach \source/ \dest in {3/2,4/3,5/2,5/3,5/4} \path[edge][red] (\source) -- node[] {$$} (\dest);
\tikzstyle{edge} = [draw,line width = 4 pt]
\foreach \source/ \dest in {2/1,4/2,4/1,5/1} \path[edge][blue]  (\source) -- node[] {$$} (\dest);
\tikzstyle{edge} = [draw,line width = 2 pt]
\end{tikzpicture}
\end{tabular}
\caption{On the left is an example of a $1$-ordering of a graph on 4 vertices. On the right is an example of an ordered 2-coloring containing a monochromatic copy of the $1$-ordering (bolded in blue).}
\label{figex1}
\end{figure}

The left side of Figure \ref{figex1} provides an example of a $1$-ordering of the 3-pan graph. Note that this is the $(e_3, 2)$-ordering of the graph.

Next we define the notion of an ordered $2$-coloring and what it means for an ordered 2-coloring to contain a copy of a $k$-ordering.

\begin{definition}
A \emph{2-coloring} on $n$ vertices is a set of $n$ vertices between every two of which is either a red or a blue edge. Additionally, the 2-coloring is said to be \emph{ordered} if every vertex has a unique integer from $1$ to $n$ assigned to it, indicating the order-label of the vertex.
\end{definition}

\begin{definition}
Given an ordered 2-coloring $G$ and a $k$-ordering $H$, we say that $G$ \emph{contains} $H$ if there exists a copy of $H$ in $G$ that satisfies the following properties:
\begin{enumerate}
\item The edges in the copy of $H$ are all the same color. (monochromatic)

\item The $i$th smallest-labeled vertex in the copy of $H$ either has order-label $i$ in $H$ or no order-label in $H$. (order-preserving)

\end{enumerate}
\end{definition}

\begin{definition}
Given an ordered 2-coloring $G$ and a $k$-coloring $H$, we say that $G$ \emph{avoids} $H$ if $G$ does not contain $H$.
\end{definition}

The right side of Figure \ref{figex1} provides an example of an ordered 2-coloring containing a $1$-ordering. In particular, vertices $5, 1, 2, 4$ in the 2-coloring correspond with vertices $e_1$, $e_2$, $e_3$, and $e_4$ in the $1$-ordering. 

Note that, as a convention, we will often refer to vertices in ordered 2-colorings by their order-label. In contrast, vertices in $k$-orderings are normally assigned names independently of their order-labels. Additionally, when we talk about vertices forming a copy of a graph, we will always list the vertices in the same order as in we list vertices in Figure \ref{figvertexlabelings}.

Finally, we define the generalized ordered Ramsey number.

\begin{definition}
The \emph{ordered Ramsey number} $R_<(H_1,H_2)$ of a $k$-ordering $H_1$ and a $j$-ordering $H_2$ is the smallest number $n$ such that any ordered 2-coloring on $n$ vertices must contain a red copy of $H_1$ or a blue copy of $H_2$. As a shorthand, $R_<(H, H)$ is denoted by $R_<(H)$.
\end{definition}

\section{Small Graphs}\label{sec:smallgraphs}

\begin{figure}
\begin{tabular}[h]{| c  c |}
\hline
Graph &
\begin{tabular}{|m{4cm} | m{3cm} | m{4cm}}
Ordering & Upper Bound & Original Source \\ [2ex]
\end{tabular} \\ [1ex] \hline

\hline
\begin{tikzpicture}[baseline={([yshift=-.5ex]current bounding box.center)}]
\tikzstyle{vertex}=[circle,fill=black!25,minimum size=15pt,inner sep=0pt];
\tikzstyle{edge} = [draw,line width = 2 pt];
\foreach \pos/\name in {{(0,0)/d_2},{(.5,1)/d_1},{(1,0)/d_3},{(.5,-1)/d_4}} \node[vertex] (\name) at \pos {$\name$};
\foreach \source/ \dest in {d_1/d_2,d_3/d_2,d_4/d_2,d_4/d_2,d_3/d_1,d_4/d_3} \path[edge][black](\source) -- node[] {$$} (\dest);
\end{tikzpicture} &

\begin{tabular}{|m{4cm} | m{3cm} |m{4cm}}
$(d_1, 1)$-ordering & 14 & Theorem \ref{dgva1} \\ [2ex] \hline
$(d_1, 2)$-ordering & 16 & Theorem \ref{dgva2} \\ [2ex] \hline
$(d_2, 1)$-ordering & 13 & Theorem \ref{dgvb1} \\ [2ex] \hline
$(d_2, 2)$-ordering & 17 & Theorem \ref{dgvb2} \\ [2ex]
\end{tabular}

\\ [8ex] \hline

\begin{tikzpicture}[baseline={([yshift=-.5ex]current bounding box.center)}]
\tikzstyle{vertex}=[circle,fill=black!25,minimum size=15pt,inner sep=0pt];
\tikzstyle{edge} = [draw,line width = 2 pt];
\foreach \pos/\name in {{(0,0)/p_1},{(1,0)/p_2},{(2,0)/p_3},{(3,0)/p_4}} \node[vertex] (\name) at \pos {$\name$};
\foreach \source/ \dest in {p_2/p_1,p_3/p_2,p_4/p_3} \path[edge][black](\source) -- node[] {$$} (\dest);
\end{tikzpicture} &

\begin{tabular}{|m{4cm} | m{3cm} |m{4cm}}
$(p_1, 1)$-ordering & 5 & Theorem \ref{4p} \\  [2ex] \hline
$(p_1, 2)$-ordering & 5 & Theorem \ref{4p} \\  [2ex] \hline
$(p_2, 1)$-ordering & 5 & Theorem \ref{4p} \\  [2ex] \hline
$(p_2, 2)$-ordering & 5 & Theorem \ref{4p} \\[2ex]
\end{tabular}

 \\[8ex] \hline

\begin{tikzpicture}[baseline={([yshift=-.5ex]current bounding box.center)}]
\tikzstyle{vertex}=[circle,fill=black!25,minimum size=15pt,inner sep=0pt];
\tikzstyle{edge} = [draw,line width = 2 pt];
\foreach \pos/\name in {{(0,0)/e_1},{(1,0)/e_2},{(2,.7)/e_3},{(2,-.7)/e_4}} \node[vertex] (\name) at \pos {$\name$};
\foreach \source/ \dest in {e_2/e_1,e_3/e_2,e_4/e_2,e_4/e_3} \path[edge][black](\source) -- node[] {$$} (\dest);
\end{tikzpicture} &

\begin{tabular}{|m{4cm} | m{3cm} |m{4cm}}
$(e_1, 1)$-ordering & 10 & Theorem \ref{3pva1} \\  [2ex] \hline
$(e_1, 2)$-ordering & 10 & Theorem \ref{3pva2} \\  [2ex] \hline
$(e_2, 1)$-ordering & 7 & Theorem \ref{3pvb1} \\  [2ex] \hline
$(e_2, 2)$-ordering & 10 & Theorem \ref{3pvb2} \\  [2ex] \hline
$(e_3, 1)$-ordering & 10 & Theorem \ref{3pvc} \\  [2ex] \hline
$(e_3, 2)$-ordering & 9 & Theorem \ref{3pvc2} \\[2ex]
\end{tabular} \\[12ex] \hline

\begin{tikzpicture}[baseline={([yshift=-.5ex]current bounding box.center)}]
\tikzstyle{vertex}=[circle,fill=black!25,minimum size=15pt,inner sep=0pt];
\tikzstyle{edge} = [draw,line width = 2 pt];
\foreach \pos/\name in {{(0,0)/s_1},{(1,0)/s_2},{(2,.7)/s_3},{(2,-.7)/s_4}} \node[vertex] (\name) at \pos {$\name$};
\foreach \source/ \dest in {s_2/s_1,s_3/s_2,s_4/s_2} \path[edge][black](\source) -- node[] {$$} (\dest);
\end{tikzpicture} &

\begin{tabular}{|m{4cm} | m{3cm} |m{4cm}}
$(s_1, 1)$-ordering & 6 & Theorem \ref{4sva1} \\  [2ex] \hline
$(s_1, 2)$-ordering & 6 & Theorem \ref{4sva2} \\  [2ex] \hline
$(s_2, 1)$-ordering & 6 & Theorem 12 of \cite{BCKK13} \\  [2ex] \hline
$(s_2, 2)$-ordering & 9 & Theorem 12 of \cite{BCKK13} \\[2ex]
\end{tabular} \\[8ex] \hline

\begin{tikzpicture}[baseline={([yshift=-.5ex]current bounding box.center)}]
\tikzstyle{vertex}=[circle,fill=black!25,minimum size=15pt,inner sep=0pt];
\tikzstyle{edge} = [draw,line width = 2 pt];
\foreach \pos/\name in {{(0,0)/c_4},{(1,0)/c_3},{(1,1)/c_2},{(0,1)/c_1}} \node[vertex] (\name) at \pos {$\name$};
\foreach \source/ \dest in {c_2/c_1,c_3/c_2,c_4/c_1,c_4/c_3} \path[edge][black](\source) -- node[] {$$} (\dest);
\end{tikzpicture} &

\begin{tabular}{|m{4cm} | m{3cm} |m{4cm}}
$(c_1, 1)$-ordering & 6 & Page 391 of \cite{ChHa72ii} \\ [3ex] \hline
$(c_1, 2)$-ordering & 6 & Page 391 of \cite{ChHa72ii} \\ [3ex]
\end{tabular}
\\[1ex] \hline 

\begin{tikzpicture}[baseline={([yshift=-.5ex]current bounding box.center)}]
\tikzstyle{vertex}=[circle,fill=black!25,minimum size=15pt,inner sep=0pt];
\tikzstyle{edge} = [draw,line width = 2 pt];
\foreach \pos/\name in {{(0,0)/k_4},{(1,0)/k_3},{(1,1)/k_2},{(0,1)/k_1}} \node[vertex] (\name) at \pos {$\name$};
\foreach \source/ \dest in {k_2/k_1,k_3/k_1,k_3/k_2,k_4/k_1,k_4/k_2,k_4/k_3} \path[edge][black](\source) -- node[] {$$} (\dest);
\end{tikzpicture} &

\begin{tabular}{|m{4cm} | m{3cm} |m{4cm}}
$(k_1, 1)$-ordering & 18 & Page 391 of \cite{ChHa72ii} \\ [3ex] \hline
$(k_1, 2)$-ordering & 18 & Page 391 of \cite{ChHa72ii} \\ [3ex]
\end{tabular}\\[1ex]\hline  

\end{tabular}
\caption{Upper bounds on ordered Ramsey numbers of 4-vertex connected 1-orderings.}
\label{figtable}
\end{figure}

In this section we present our main results, providing an upper bound for $R_<(H)$ for each 1-ordering $H$ of each connected 4-vertex graph. These results are summarized in Figure \ref{figtable}, which provides for each $1$-ordering the best known upper bound for $R_<(H)$, as well as a citation to the proof of that bound. Note that although each 4-vertex graph has four $1$-orderings for each of its four nodes, $1$-orderings with order-label $k$ and those with order-label $(5 - k)$ are equivalent by symmetry; moreover, reflectional and rotational vertex symmetries additionally render certain $1$-orderings equivalent. To account for this, Figure \ref{figtable} only considers a single $1$-ordering in each symmetry class. Additionally, two graphs in Figure \ref{figtable} have upper bounds which are trivially obtained: for both the complete graph $K_4$ and the cycle $C_4$, any copy of the graph is guaranteed to contain every $1$-ordering of it, rendering the ordered and unordered Ramsey numbers equal.

Our proofs introduce two techniques. The first, which we call the \emph{parent-graph technique} is to exploit the existence of another graph that is guaranteed to contain a copy of our ordered graph. The second, which we call the \emph{two-vertex anchoring technique} is to analyze ordered 2-colorings from the perspective of two anchor vertices. This is a natural extension of small-graph Ramsey number arguments, which typically analyze 2-colorings from the perspective of a single vertex.

We begin by applying the parent-graph technique to 1-orderings of the 4-path $P_4$. Surprisingly, we find that the ordered Ramsey number of each 1-ordering is precisely the same as $R(P_4)$.

\begin{theorem}
Let $H$ be any 1-ordering of the 4-path $P_4$. Then $R_<(H) = 5$.
\label{4p}
\end{theorem}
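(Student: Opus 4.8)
The plan is to prove both the upper bound $R_<(H) \le 5$ and the matching lower bound $R_<(H) \ge 5$, the latter coming for free from the classical fact $R(P_4) = 5$, since any ordered $2$-coloring with no monochromatic (unordered) $P_4$ certainly has no monochromatic copy of the $1$-ordering $H$. So the real content is the upper bound. For this I would use the parent-graph technique advertised in the introduction: I look for a (small) unordered graph $H'$ with the property that \emph{every} vertex-ordering of $H'$ contains $H$ as an ordered subgraph, which then gives $R_<(H) \le R(H')$. The natural candidate is $H' = P_5$, the $5$-vertex path, whose classical Ramsey number is also known, but I should double-check $R(P_5)$ and whether it is $\le 5$; more robustly, one can take $H'$ to be a suitable tree on few vertices (for instance a ``broom'' or a path with a pendant edge) whose Ramsey number is exactly $5$, so that the bound $R_<(H) \le R(H') = 5$ is tight.

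The key steps, in order, would be: (1) Reduce to the four symmetry classes of $1$-orderings of $P_4$ listed in Figure~\ref{figtable}, namely $(p_1,1)$, $(p_1,2)$, $(p_2,1)$, $(p_2,2)$ — recalling that order-label $\ell$ and $5-\ell$ are equivalent, as are the two end-vertices and the two middle-vertices of $P_4$. (2) For each such $H$, exhibit a concrete unordered parent graph $H'$ on at most $5$ vertices and argue combinatorially that in any linear order on $V(H')$ one can locate four vertices inducing (within $H'$, hence within any monochromatic copy of $H'$) a path whose relative order realizes the prescribed order-label; e.g. if $H'$ is a path $v_1 v_2 v_3 v_4 v_5$ then among the five positions of $v_1,\dots,v_5$ in the linear order one can always choose a length-$3$ subpath avoiding whichever extreme position is forbidden, placing the labeled vertex where $H$ wants it. (3) Invoke $R(H') = 5$ (citing the small-graph Ramsey literature already referenced, e.g. \cite{ChHa72ii, Radziszowski14}) to conclude $R_<(H) \le 5$. (4) Combine with the lower bound to get equality.

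The main obstacle I anticipate is step (2): making sure that a \emph{single} parent graph $H'$ of Ramsey number exactly $5$ works for a given ordering class — in particular for the $(p_1,2)$ and $(p_2,2)$ orderings, where the labeled vertex must sit in the second position globally, not merely second among the four chosen vertices. For an end-vertex labeled $2$, I need a parent path long enough and a subpath-selection argument guaranteeing one vertex below the chosen end and two above it; $P_5$ should suffice (given any arrangement of $5$ points on a line, and any designated end $p_1$ of the path that must land in global position $2$, either $v_1$ or $v_5$ of the $P_5$ is not in the bottom two positions, and a short case check places things), but I must verify the handful of arrangements rather than wave at it. If a uniform $H'$ is awkward for some class, the fallback is a tiny direct two-vertex-anchoring argument on $K_5$, but I expect the parent-graph route to go through cleanly for all four classes, which is why I would present it that way.
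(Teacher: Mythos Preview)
Your parent-graph plan has a genuine gap at the crucial numerical step: $R(P_5)=6$, not $5$ (Gerencs\'er--Gy\'arf\'as give $R(P_n)=n+\lfloor n/2\rfloor-1$), so taking $H'=P_5$ only yields $R_<(H)\le 6$. More seriously, no single parent graph with Ramsey number exactly $5$ can work here. Any $H'$ that contains $P_4$ and in which \emph{every} vertex-ordering realizes, say, the $(p_1,1)$-ordering must have at least $5$ vertices (in $P_4$ itself the smallest vertex can be an interior one), and the connected graphs on $\ge 5$ vertices all have Ramsey number $\ge 6$; the disconnected candidates with Ramsey number $5$ (e.g.\ $2K_2$) do not even contain $P_4$. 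So the ``find one $H'$ with $R(H')=5$'' strategy cannot succeed, and your step~(3) collapses for every choice of $H'$ you might make.

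The paper gets around exactly this obstruction by using \emph{two} parent graphs, $C_4$ and $C_5$, together with a structural dichotomy on $2$-colorings of $K_5$: either the coloring has no monochromatic triangle, in which case both color classes are $5$-cycles and one invokes the (short but nontrivial) fact that every ordering of $C_5$ contains $H$; or it has a monochromatic triangle, and a short case analysis forces either a monochromatic $C_4$ (every ordering of which contains $H$ by rotational symmetry) or a single explicit coloring that is checked directly. Your acknowledged fallback --- a direct argument on $K_5$ --- is therefore not a fallback but the actual content; if you pursue it, you will end up reproducing essentially the paper's triangle/no-triangle split and the $C_4$/$C_5$ parent observations rather than a clean single-parent bound.
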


\begin{proof}
Let $i$ be such that the $i$th vertex of $H$ has an order-label, and denote that order-label $l$. Without loss of generality, $l$ is either $1$ or $2$.

We begin by observing that any ordering of the graph $C_4$ (with vertices $c_1, c_2, c_3, c_4$) must contain $H$ as a subgraph. In particular, because $C_4$ is rotationally symmetric, we may assume without loss of generality that $c_i$ has order-label $l$. It follows that vertices $c_1, c_2, c_3, c_4$ form a copy of $H$.

Additionally, we observe that any ordering of the graph $C_5$ (with vertices $x_1, x_2, x_3, x_4, x_5$) must contain $H$ as a subgraph. Because $C_5$ is symmetric, without loss of generality, $x_i$ is a vertex with order-label $l$. If $l = 1$, vertices $x_1, x_2, x_3, x_4$ form a copy of $H$. On the other hand, if $l = 2$, and $x_5$ does not have order-label $1$, then we are also done since vertices $x_1, x_2, x_3, x_4$ form a copy of $H$. Finally, if $l = 2$ and $x_5$ has order-label $1$, then consider the path $P$ made by vertices $x_{2i - 1} , x_{2i - 2}, x_{2i - 3}, x_{2i - 4}$, with indices taken modulo $5$. The path $P$ must contain $x_5$ (since $i \neq 5$ means that $2i \pmod 5 \neq 5$) and contains $x_i$ as its $i$th vertex, meaning that $P$ forms a copy of $H$.

Let $G$ be an ordered 2-coloring avoiding $H$, and for the sake of contradiction, suppose that $|G| = 5$. If $G$ does not contain a monochromatic triangle, then $G$ must consist of a red copy of $C_5$ and a blue copy of $C_5$ (Page 390 of \cite{ChHa72ii}). However, the presence of a monochromatic $C_5$ implies that $G$ contains $H$, a contradiction. Thus, we assume that $G$ contains a monochromatic triangle. Without loss of generality, this triangle, formed by vertices $u_1,u_2,u_3$, is red. Let $w_1,w_2$ be the two vertices not contained within this triangle. If there exist at least 2 red edges between $w_1$ and the vertices in the triangle ($u_1,u_2$ without loss of generality), then vertices $w_1,u_1,u_3,u_2$ form a red copy of $C_4$; thus, $G$ is forced to contain $H$, a contradiction. Therefore, for each $i$, the edge $(w_i,u_j)$ can be red for at most one $j$.

We claim that for each $i$, there exists a unique $j$ such that the edge $(w_i,u_j)$ is red. For the sake of contradiction, suppose that the edges $(w_1,u_j)$ are blue for all $j$. Without loss of generality, edges $(w_2,u_1)$ and $(w_2,u_2)$ are also blue. Then vertices $w_1,u_1,w_2,u_2$, in that order, form a blue copy of $C_4$, thus containing a copy of $H$, a contradiction. 

Without loss of generality, edges $(w_1,u_1)$ and $(w_2,u_2)$ are red, and all other edges of the form $(w_i,u_j)$ are blue. If edge $(w_1,w_2)$ is red, then vertices $w_1,u_1,u_2,w_2$ form a red copy of $C_4$, a contradiction. Thus, edge $(w_1,w_2)$ must be blue.

All edge colors in $G$ are now determined. In particular, $(w_1, w_2)$, $(w_1, u_2)$, $(w_1, u_3)$, $(w_2, u_1)$, and $(w_2, u_3)$ are blue, while the remaining edges are red. 

Performing casework on the location of the smallest vertex of $G$ (which we omit for the sake of brevity), we can see that any 1-ordering of $P_4$ with order-label $l = 1$ is contained in $G$. Performing casework on the locations of the smallest and second-smallest vertices of $G$, we can see that any 1-ordering of $P_4$ with order-label $l = 2$ is contained in $G$. By symmetry, $G$ must contain all other 1-orderings of the 4-path.

Hence, in all cases, if $|G| = 5$, then $G$ must contain $H$. This implies that $R_<(H) \le 5$. Since $R(P_4) = 5$ \cite{ChHa72ii} and $H$ is a 1-ordering of $P_4$, we must have $R_<(H) = 5$.


\end{proof}

Next, we apply the parent-graph technique to certain $1$-orderings of the 3-pan. Later, in Theorem \ref{3pvc2}, we will strengthen this bound for $l$ in the set $\{2, 3\}$.

\begin{theorem}
For any $l \in \{1, 2, 3, 4\}$, let $H$ be the $(e_3,l)$-ordering of the 3-pan. Then $R_<(H) \le 10$.
\label{3pvc}
\end{theorem}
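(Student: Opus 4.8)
The plan is to apply the parent-graph technique, mirroring the structure of the proof of Theorem \ref{4p}. The $3$-pan is a triangle $e_2 e_3 e_4$ together with a pendant vertex $e_1$ attached to $e_2$. The key observation to establish first is that there exists an unordered graph $H'$, small enough that $R(H')$ is known to be at most $10$, such that \emph{every} ordering of $H'$ contains the $(e_3,l)$-ordering of the $3$-pan as an ordered subgraph. A natural candidate is $K_5$: since $R(K_3) = 6 \le 10$ is too weak on its own, but the wheel $W_4$ (or $K_5$ minus a perfect matching, i.e. $K_{2,2,1}$, or simply $K_5$) all have classical Ramsey number conveniently bounded; in particular I would look for $H'$ with $R(H') \le 10$ among the graphs listed with Ramsey-number data in \cite{ChHa72ii, Radziszowski14}. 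The cleanest choice is likely the complete graph $K_4$, which has $R(K_4) = 18$ — too large — so instead I expect the right parent graph is the $5$-wheel $W_5 = K_1 + C_5$ or the book graph $B_2 = K_{2}+\overline{K_2}$; one of these has a small enough Ramsey number.

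Concretely, the first step is to exhibit, for each $l \in \{1,2,3,4\}$, and for an appropriately chosen symmetric-enough parent graph $H'$, an embedding of the $(e_3,l)$-ordering into every ordering of $H'$. Because the $3$-pan has an automorphism swapping $e_3$ and $e_4$ (both are the two triangle vertices of degree $2$), and because $l$ and $5-l$ give equivalent orderings by color-reversal of the host $2$-coloring, it suffices to handle $l \in \{1,2\}$. For $l=1$: in any ordering of $H'$, pick the globally smallest vertex $v$ of $H'$ and check that $v$ together with a suitable triangle-plus-pendant through $v$ (with $v$ playing the role of $e_3$) always exists — this needs $H'$ to have the property that every vertex lies in a triangle with a further pendant edge, and moreover this must survive even when $v$ is forced to be the minimum. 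For $l=2$: pick the vertex $v$ with the property that exactly one vertex of $H'$ precedes it among the vertices we will use; here the analysis is more delicate, and I would imitate the $C_5$ argument in Theorem \ref{4p}, rotating the chosen copy of the $3$-pan around a cycle inside $H'$ so that the unique vertex below $v$ is absorbed as one of the "free" (unlabeled) vertices $e_1, e_2, e_4$.

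The second step is the standard Ramsey reduction: if $G$ is an ordered $2$-coloring on $10$ vertices avoiding $H$, then since $10 \ge R(H')$ (which I will cite), $G$ contains a monochromatic copy of $H'$; by Step 1 that copy — with whatever order its vertices inherit from $G$ — contains the ordered $3$-pan $H$, a contradiction. Hence $R_<(H) \le 10$.

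The main obstacle is Step 1, specifically identifying a parent graph $H'$ that is simultaneously (a) vertex-transitive enough (or has enough symmetry) that "pick the $l$-th relevant vertex" can always be arranged, (b) such that every vertex is the apex of a triangle-with-pendant, and (c) has classical Ramsey number $R(H') \le 10$. If no single such graph exists, the fallback — which I expect may actually be necessary — is a two-stage argument: use a mildly larger structural fact (e.g. on $10$ vertices one gets a monochromatic $K_4$ minus an edge, or a monochromatic triangle together with control over its neighborhood, as in the $C_4$ / monochromatic-triangle dichotomy used in the proof of Theorem \ref{4p}) and then do casework on the relative order of the few vertices involved, exactly as the omitted casework at the end of that proof. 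I would also double-check the small cases $l \in \{1,4\}$ separately, since when $l=1$ the vertex $e_3$ being globally minimal makes the embedding essentially automatic and may give a cleaner direct argument without invoking the full parent-graph machinery.
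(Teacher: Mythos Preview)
Your approach is exactly the paper's --- the parent-graph technique --- but you never commit to the correct parent graph, which you in fact list twice without recognizing it. The book $B_2 = K_2 + \overline{K_2}$ and ``$K_4$ minus an edge'' are both the \emph{diamond graph}, and its Ramsey number is exactly $10$ \cite{ChHa72ii}. That single observation is the whole proof: the diamond graph has two triangles $d_1d_2d_3$ and $d_2d_3d_4$ sharing the edge $d_2d_3$, and every one of its four vertices can serve as $e_3$ in a $3$-pan subgraph. If the vertex with label $l$ is $d_1$, map $(e_1,e_2,e_3,e_4) \mapsto (d_4,d_2,d_1,d_3)$; if it is $d_2$, map $(e_1,e_2,e_3,e_4) \mapsto (d_1,d_3,d_2,d_4)$; the cases $d_3,d_4$ follow from the automorphisms $d_2\leftrightarrow d_3$ and $d_1\leftrightarrow d_4$. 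No five- or six-vertex parent, no separate treatment of $l\in\{1,2\}$, and no fallback casework is needed.

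The gap in your proposal is therefore executional rather than conceptual: you never check property (c) for any candidate, and several of the graphs you float cannot work (e.g.\ $R(K_5)\ge 43$, $R(W_5)=11$). Once you look up $R(K_4^{-})=10$ and verify the two embeddings above, the argument is three lines.
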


\begin{proof}
Let $G$ be a 2-coloring containing a monochromatic copy of the diamond graph. We claim that $G$ must also contain $H$. In particular, it suffices to show that any ordering of the diamond graph must contain a copy of $H$. If $d_1$ in the diamond graph has order-label $l$, vertices $d_1$, $d_2$, $d_3$, and $d_4$ in the diamond graph correspond to vertices $e_3$, $e_2$, $e_4$, and $e_1$, respectively, in the 3-pan. If $d_2$ in the diamond graph has order-label $l$, vertices $d_1$, $d_2$, $d_3$, and $d_4$ correspond to vertices $e_1$, $e_3$, $e_2$, and $e_4$, respectively, in the 3-pan. Thus, any $e_3$-ordering of the 3-pan is contained within the diamond graph. Since the Ramsey number of the diamond graph is 10 \cite{ChHa72ii}, we have $R_<(H) \le 10$.
\end{proof}

For completeness, we also apply the parent-graph technique to a 1-ordering of the 4-star. This application is somewhat less interesting than the others.

\begin{theorem}
Let $H$ be the $(s_1,2)$-ordering of the 4-star. Then $R_<(H) = 6$.
\label{4sva2}
\end{theorem}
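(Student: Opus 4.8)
The plan is to prove $R_<(H) = 6$ by establishing the two bounds separately, with the upper bound handled by the parent-graph technique applied to the parent graph $K_{1,3}$ itself.

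For the lower bound I would use that $H$ is a $1$-ordering of the unordered $4$-star $K_{1,3}$: any ordered $2$-coloring that contains $H$ also contains, after forgetting order-labels, an (unordered) monochromatic copy of $K_{1,3}$. Since $R(K_{1,3}) = 6$ — witnessed for instance by the decomposition of $K_5$ into a red $5$-cycle and a blue $5$-cycle, in which every vertex meets exactly two red and two blue edges — any ordering of such a decomposition avoids $H$, so $R_<(H) \ge 6$.

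For the upper bound I would take $G$ to be an ordered $2$-coloring on $6$ vertices and let $v$ be its vertex with order-label $1$. Then $v$ is incident to five edges, so at least three of them share a color, say red; choose three vertices $a < b < c$ joined to $v$ by red edges. The vertices $v, a, b, c$ (with $v$ smallest) induce a red $K_{1,3}$ centered at $v$, and the identification $s_2 \mapsto v$, $s_1 \mapsto a$, $s_3 \mapsto b$, $s_4 \mapsto c$ then exhibits a monochromatic, order-preserving copy of $H$ in $G$: among the four chosen vertices the second smallest is $a$, which plays the role of $s_1$ (order-label $2$), while $v, b, c$ carry no order-labels in $H$ and hence impose no further constraint, and the three edges $s_1s_2$, $s_2s_3$, $s_2s_4$ correspond to $av, vb, vc$, all red. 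This gives $R_<(H) \le 6$, and combined with the lower bound, $R_<(H) = 6$.

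I do not expect a genuine obstacle here — this is the deliberately "less interesting" application of the technique — but the one point worth getting right is why the naive form of the parent-graph technique does not immediately yield the bound $6$. Not every ordering of $K_{1,3}$ contains $H$: the ordering assigning order-label $2$ to the center forces the leaf $s_1$ to land at a non-matching position. The smallest supergraphs all of whose orderings contain $H$ — for example the diamond graph, of Ramsey number $10$, or the five-vertex star $K_{1,4}$, of Ramsey number $7$ — have Ramsey number exceeding $6$. The fix is exactly the observation above: among the monochromatic copies of $K_{1,3}$ guaranteed on six vertices, one may always be chosen centered at the globally smallest vertex, and such a copy has its center at rank $1$ within the copy, so it does contain $H$.
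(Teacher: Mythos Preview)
Your proof is correct and follows essentially the same idea as the paper's: both observe that a monochromatic star centered at the globally smallest vertex automatically realizes the $(s_1,2)$-ordering, since one of its leaves then sits at rank~$2$. The paper phrases this as ``any copy of the $(s_2,1)$-ordering is a copy of the $(s_1,2)$-ordering'' and then cites Theorem~12 of \cite{BCKK13} for $R_<$ of the $(s_2,1)$-ordering, whereas you supply the one-line pigeonhole argument at vertex~$1$ directly; your version is thus self-contained where the paper's is not, but the underlying mechanism is identical.
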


\begin{proof}
Note that any copy of a $(s_2,1)$-ordering of the 4-star is also a copy of a $(s_1,2)$-ordering of the 4-star, since vertices $s_1,s_3,s_4$ are symmetric. Since the ordered Ramsey number of the $(s_2,1)$-ordering of the 4-star is 6 (Theorem 12 of \cite{BCKK13}), we have $R_<(H) \le 6$. Moreover, since the Ramsey number of the 4-star is 6 (Page 391 of \cite{ChHa72ii}), $R_<(H) \ge 6$, implying equality.
\end{proof}

Our next seven results, the paper's most difficult, rely on the two-vertex anchoring technique. Classically, one finds upper bounds for Ramsey numbers of small graphs using what we call \emph{single-vertex anchoring}, in which the proof centers around a single vertex. In particular, one selects a vertex $v$ in the 2-coloring and individually analyzes the sets $X$ of vertices connected to $v$ by red edges and $Y$ of edges connected to $v$ by blue edges. On the other hand, two-vertex anchoring selects two vertices $v_1$ and $v_2$ with sets $X_1, Y_1$ and $X_2, Y_2$ similarly defined for $v_1$ and $v_2$, respectively. The proof proceeds by individually analyzing $X_1 \cap X_2$, $X_1 \cap Y_2$, $Y_1 \cap X_2$, and $Y_1 \cap Y_2$.


Our next theorem uses two-vertex anchoring to prove an upper bound on the ordered Ramsey number of the remaining 1-ordering of the 4-star.

\begin{theorem}
Let $H$ be the $(s_1,1)$-ordering of the 4-star. Then $R_<(H) = 6$.
\label{4sva1}
\end{theorem}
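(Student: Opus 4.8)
The plan is to establish $R_<(H) \le 6$ directly via the two-vertex anchoring technique (the matching lower bound $R_<(H) \ge 6$ is immediate from $R(\text{4-star}) = 6$, cited on page 391 of \cite{ChHa72ii}, exactly as in the proof of Theorem \ref{4sva2}). First I would unpack what a monochromatic copy of the $(s_1,1)$-ordering requires: a monochromatic star $K_{1,3}$ with center $c$ and leaves $a,b,d$, such that the vertex playing the role of $s_1$ — which is a \emph{leaf}, not the center — is the globally smallest among the four vertices of the copy. So the task reduces to showing that in any ordered 2-coloring $G$ on $6$ vertices, there is a vertex $v$ and three vertices $x,y,z$, all larger than $v$ in order-label, such that the edges $vx$, $vy$, $vz$ are monochromatic of one color — wait, more precisely: we need a monochromatic $K_{1,3}$ one of whose leaves is smaller than the other three vertices. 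Equivalently, letting $v$ be the smallest vertex used, we need $v$ to have a neighbor $c$ (in some color class) that has two further same-colored neighbors among the vertices larger than $v$, OR $v$ itself to be the center — no: since $s_1$ must be a leaf, we specifically need $v$ to be a leaf. So the clean reformulation: there exist $v < c$ and $v < y$, $v < z$ (with $y,z,c$ distinct) such that $\{vc, cy, cz\}$ is monochromatic. Here $c$ is the center, $v,y,z$ the leaves, and $v$ is forced smallest only relative to $y,z$; but we also need $v$ smaller than $c$. Hmm — actually $v$ need only be the smallest of $\{v,c,y,z\}$, so I would choose $v$ to be the minimum-labeled vertex of the whole configuration and then just need $c,y,z$ all above it.

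The two anchor vertices I would pick are the two smallest vertices of $G$, call them $1$ and $2$ (by order-label). For each $i\in\{1,2\}$, partition the remaining vertices $\{3,4,5,6\}$ into $X_i$ (joined to $i$ by red) and $Y_i$ (joined to $i$ by blue), inducing the four-way partition $X_1\cap X_2$, $X_1\cap Y_2$, $Y_1\cap X_2$, $Y_1\cap Y_2$ of a $4$-element set. The goal is: show that either vertex $1$ serves as the small leaf $s_1$ of a monochromatic $K_{1,3}$ living entirely on $\{1,\dots,6\}$, or vertex $2$ does (with its three partners, and since $2$'s only smaller vertex is $1$, we must then also ensure $1$ is not needed — but if $1$ is available as a partner of $2$ that's fine too since then $2$ is still smallest among $\{2,\cdot,\cdot,\cdot\}$ only if $1$ is excluded; so really the clean statement is "$1$ is the small leaf"). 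Concretely: if vertex $1$ has $\ge 3$ red edges to $\{2,3,4,5,6\}$ going to vertices $p,q,r$, and among $p,q,r$ there is any red edge, say $pq$ red, then $\{1,p,q\}$ plus — no, I need a star. Let me re-approach: if vertex $c$ (any vertex $\ge 2$) has three red neighbors among the vertices larger than $1$ together with the red edge $1c$, that is a red $K_{1,3}$ centered at $c$ with a leaf $1$... but $1c$ red plus $c$ having two red neighbors above $1$ suffices. So: look for a vertex $c\in\{2,\dots,6\}$ with $1c$ red and $\ge 2$ red edges to $(\{2,\dots,6\}\setminus\{c\})$; symmetrically for blue. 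If no such $c$ exists in either color, I would derive a contradiction by a counting/parity argument on the five edges out of vertex $1$ and the degrees within $\{2,\dots,6\}$.

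The main obstacle will be the case analysis when vertex $1$'s red and blue degrees are both moderate (say $2$ and $3$, or $3$ and $2$) and the monochromatic neighborhoods of $1$ fail to contain a suitable high-degree center — here is where the \emph{second} anchor vertex $2$ earns its keep: I would argue that the edge-coloring restricted to $\{2,3,4,5,6\}$, a $2$-colored $K_5$, must contain a monochromatic structure (e.g.\ using that every $2$-colored $K_5$ has a vertex of monochromatic degree $\ge 2$ in each color, or contains a monochromatic $P_4$/triangle), and then splice vertex $1$ or vertex $2$ onto it as the small leaf using whichever color edge $1$ or $2$ sends into the relevant neighborhood. The bookkeeping to confirm that in every branch the spliced-on small vertex is genuinely the \emph{leaf} (role $s_1$) and genuinely the \emph{minimum} — rather than accidentally being forced to be the center — is the delicate part, and I expect it to require a short but careful sub-case breakdown, mirroring the "casework on the location of the smallest vertex" that Theorem \ref{4p} suppressed for brevity. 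I would organize it so that the generic sub-case is handled uniformly and only one or two degenerate colorings of $K_5$ on $\{2,\dots,6\}$ need hand inspection.
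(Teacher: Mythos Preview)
Your plan matches the paper's: anchor on vertices $1$ and $2$, and use that any $c\in\{2,\dots,6\}$ with $(1,c)$ of color $\chi$ and two further $\chi$-edges inside $\{2,\dots,6\}$ already yields a copy of $H$ with leaf~$1$. The paper applies this observation to $c=2$ first---with $(1,2)$ blue, vertex $2$ can have at most one further blue edge upward---and then runs a short three-way case split on the sizes of $V_{RR}=\{v:(1,v),(2,v)\text{ both red}\}$ and $V_{BR}=\{v:(1,v)\text{ blue},\,(2,v)\text{ red}\}$, obtaining $|G|\le 5$ in each case.

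Where your proposal is thin is the endgame, which you explicitly defer. A bare ``counting/parity argument on the five edges out of vertex $1$'' will not close it: the constraint that each $c\in\{2,\dots,6\}$ has at most one $\chi_c$-edge inside the $K_5$ on $\{2,\dots,6\}$ is consistent with the degree-sum identities and yields no numerical contradiction. What does work is the structural ``splicing'' you gesture at, and it is short enough that you should simply write it out: with $(1,2)$ blue, vertex $2$ has at least three red neighbors in $\{3,4,5,6\}$; pick one such neighbor $a>3$ (possible since there are at least three) and split on $\chi_a$. If $\chi_a$ is blue then $a$ has at least three red edges inside $\{2,\dots,6\}$, one of which is $(2,a)$, so $a$ centers a red $K_{1,3}$ with leaf $2$. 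If $\chi_a$ is red then $(2,a)$ is $a$'s sole red edge in the $K_5$, forcing $a$ to have blue edges to all of $\{3,4,5,6\}\setminus\{a\}$, a blue $K_{1,3}$ centered at $a$ whose smallest vertex $3<a$ is a leaf. Either way you obtain $H$, and your worry about the small vertex accidentally landing at the center is handled automatically by this choice of $a$. This is essentially the content of the paper's case analysis, just organized differently; but as written your proposal stops before the substantive step.
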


\begin{proof}
Let $G$ be an ordered 2-coloring avoiding $H$. Without loss of generality, edge $(1,2)$ in $G$ is blue. Let $V_R$ be the set of vertices connected to vertex 1 by red edges, and let $V_B$ be the set of vertices other than vertex 2 connected to vertex 1 by blue edges.

Note that there can be at most 1 blue edge between vertex 2 and a vertex with a larger order-label. Otherwise, there would exist a blue copy of $H$ formed by vertices 1 and 2 and any two other vertices connected to vertex 2 by blue edges. Let $V_{RR}$ be the set of vertices in $V_R$ connected to vertex 2 by red edges, and let $V_{BR}$ be the set of vertices in $V_B$ connected to vertex 2 by blue edges. We have already shown that there are at most 3 vertices outside $V_{RR} \cup V_{BR}$. We finish by considering the following cases:
\begin{itemize}
\item $|V_{RR}| = 0$: Note that $V_{BR}$ must avoid monochromatic 3-paths, since combined with either vertex 1 or 2, a monochromatic 3-path in $V_{BR}$ would yield a monochromatic copy of $H$. Since the Ramsey number of the 3-path is 3, $V_{BR}$ has size at most 2. Thus, $|G| \le 3 + |V_{BR}| \le 3 + 2 = 5$.

\item $|V_{RR}| \ge 1, |V_{BR}| = 0$: Note that $V_{RR}$ can only contain blue edges, since if $V_{RR}$ contained a red edge, there would exist a red copy of $H$ formed by vertices 1 and 2 and the two vertices in the red edge. If $|V_{RR}| \ge 4$, then there exists a blue copy of $K_4$ and thus a blue copy of $H$ in $G$, a contradiction. Thus, $|V_{RR}| \le 3$. If there does not exist a vertex $u > 2$ such that edge $(2,u)$ is blue, then we are done, as $|G| = 2 + |V_{RR}| \le 2 + 3 \le 5$.

Now suppose that there exists a vertex $u > 2$ with edge $(2,u)$ blue. For the sake of contradiction, assume that $|V_{RR}| \ge 3$. Let $v_1,v_2,v_3 \in V_{RR}$ be such that $v_1 < v_2 < v_3$. If edge $(u,v_3)$ is blue, then the 4-star formed by vertices $u,v_1,v_2,v_3$, with $v_3$ in the center of the star, is blue since $V_{RR}$ contains only blue edges. Moreover, since $v_3 > v_1, v_2$, the 4-star forms a copy of $H$, the $(s_1, 1)$-ordering of the 4-star, a contradiction. Since edge $(u,v_3)$ is red, there exists a red copy of $H$ formed by vertices $1,v_3,2,u$, a contradiction. Thus, $|V_{RR}| \le 2$, and $|G| = 3 + |V_{RR}| \le 3 + 2 = 5$.

\item $|V_{RR}|, |V_{BR}| \ge 1$: Let $v \in V_{BR}$. If there exist $v_1,v_2 > 1$ such that edges $(v,v_1)$ and $(v,v_2)$ are blue, then since edge $(1,v)$ is blue, vertices $1,v,v_1,v_2$ form a blue copy of $H$. If there exist $v_1,v_2 > 2$ such that edges $(v,v_1)$ and $(v,v_2)$ are red, then since edge $(2,v)$ is red, vertices $1,v,v_1,v_2$ form a red copy of $H$. This implies that there are at most two blue edges and at most two red edges containing $v$. Thus, $|G| \le 1 + 2 + 2 = 5$.
\end{itemize}
\end{proof}

Theorems \ref{3pva1} and \ref{3pvc2} use two-vertex anchoring to prove upper bounds on the ordered Ramsey numbers of two 1-orderings of the 3-pan.

\begin{theorem}
Let $H$ be the $(e_1,1)$-ordering of the 3-pan. Then $R_<(H) \le 10$.
\label{3pva1}
\end{theorem}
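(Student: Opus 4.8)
The plan is to use the two-vertex anchoring technique with the two smallest vertices, $1$ and $2$, as anchors. Let $G$ be an ordered $2$-coloring avoiding $H$; I will show $|G|\le 9$. A preliminary observation: $G$ contains no monochromatic $K_4$, since if $u_1<u_2<u_3<u_4$ span a monochromatic $K_4$, the triangle on $u_2,u_3,u_4$ together with the pendant $u_1$ is already a copy of $H$.

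Partition the vertices $\{3,\ldots,|G|\}$ according to the colors of their edges to vertex $1$ and to vertex $2$: let $A,B,C,D$ consist of the vertices joined to $(1,2)$ by colors (red, red), (red, blue), (blue, red), (blue, blue) respectively. I would first establish a few structural facts. The set $A$ contains no red edge: a red edge $xy$ in $A$ makes $2xy$ a red triangle, and since vertex $1$ is red to $x$ and smaller than $2$, the set $\{1,2,x,y\}$ is a red copy of $H$; hence $A$ is a blue clique and $|A|\le 3$. Symmetrically, a blue edge $xy$ in $D$ makes $2xy$ a blue triangle completed to a blue copy of $H$ by vertex $1$, so $D$ is a red clique and $|D|\le 3$. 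Next, every monochromatic triangle contained in $B\cup C\cup D$ must lie inside $D$: a monochromatic triangle meeting $B$ at a vertex $b$ is completed to a red copy of $H$ by vertex $1$ (if the triangle is red, since $1$ is red to $b$) or to a blue copy by vertex $2$ (if blue, since $2$ is blue to $b$); a monochromatic triangle meeting $C$ is completed analogously, using vertex $1$ in the blue case and vertex $2$ in the red case. Since $D$ is a red clique of size at most $3$, it follows that $B\cup C\cup D$ contains at most one monochromatic triangle, so by the classical fact that every $2$-coloring of $K_6$ contains at least two monochromatic triangles we get $|B\cup C\cup D|\le 5$, and therefore $|G|\le 2+|A|+|B\cup C\cup D|\le 10$.

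It remains to rule out $|G|=10$, which I expect to be the crux. In that case $|A|=3$, so $A$ is a blue triangle, and $|B\cup C\cup D|=5$. Fix $a\in A$ and split $B\cup C\cup D$ into the red-neighborhood $R_a$ and the blue-neighborhood $S_a$ of $a$. As above, $R_a$ has no red edge (a red triangle on $a$ and such an edge, completed by vertex $1$, gives a red copy of $H$), so $R_a$ is a blue clique inside $B\cup C\cup D$, which has no blue triangle, whence $|R_a|\le 2$; similarly $S_a$ has no blue edge, so it is a red clique with $|S_a|\le 3$, and if $|S_a|=3$ it is a red triangle, which must lie in $D$. Since $|R_a|+|S_a|=5$ we are forced to $|R_a|=2$, $|S_a|=3$, so $|D|=3$ and $S_a=D$; thus every vertex of $A$ is joined to all of $D$ by blue edges. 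Picking distinct $a_1,a_2\in A$ and any $d\in D$, the triangle $a_1a_2d$ is blue, and since vertex $1$ is blue to $d$ and smaller than $a_1,a_2,d$, the set $\{1,a_1,a_2,d\}$ is a blue copy of $H$ — a contradiction. Hence $|G|\le 9$, so $R_<(H)\le 10$.

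The main difficulty is in orchestrating the final case: one must use both anchors at once (vertex $1$ to destroy the "red" configurations and vertex $2$ to destroy the "blue" ones), and must repeatedly exploit that vertex $1$, being the global minimum, is always available to serve as the pendant in the copies of $H$ that produce the contradictions.
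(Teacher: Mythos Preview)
Your proof is correct and takes a genuinely different route from the paper after the common two-vertex anchoring setup. Both arguments partition $\{3,\ldots,|G|\}$ by the colors of edges to vertices $1$ and $2$ and observe that $A$ (the red-red set) is a blue clique and $D$ (the blue-blue set) a red clique, each of size at most $3$. From there the paper assumes without loss of generality that edge $(1,2)$ is red, which yields the extra constraint that $C$ (their $V_{BR}$) is also a blue clique; it then runs a four-case analysis on the pair $(|B|,|C|)$. You never touch the color of $(1,2)$; instead you show that every monochromatic triangle in $B\cup C\cup D$ must sit inside $D$, invoke Goodman's bound that any $2$-coloring of $K_6$ has at least two monochromatic triangles to get $|B\cup C\cup D|\le 5$, and finish with a single extremal-case argument ruling out $|G|=10$. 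Your approach trades the paper's case split for a clean counting lemma plus one endgame, and it is pleasantly symmetric in the two colors.

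One small point to tighten: the word ``similarly'' for ``$S_a$ has no blue edge'' hides a genuine asymmetry. For $R_a$ you attached vertex $1$ as a red pendant to $a\in A$; but both $1$ and $2$ are red to $a$, so neither can serve as a blue pendant to $a$. The fix is immediate---a blue triangle $axy$ with $x\in B\cup C\cup D$ always has a blue edge from $x$ to vertex $1$ (if $x\in C\cup D$) or to vertex $2$ (if $x\in B$), giving the needed pendant---but you should spell this out rather than claim symmetry.
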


\begin{proof}
Let $G$ be an ordered 2-coloring avoiding $H$. Define $V_{RR}, V_{RB}, V_{BB}, V_{BR}$ to be the sets of vertices in $G$ connecting to vertex 1 by red, red, blue, and blue edges, respectively, and to vertex 2 by red, blue, blue, and red edges, respectively. Without loss of generality, the edge between vertices 1 and 2 is red.

Observe that $V_{BB}$ contains only red edges, since if there were a blue edge $(u, v)$ in $V_{BB}$, there would be a blue copy of $H$ formed by vertices $1$, $u$, $2$, $v$. Thus, $|V_{BB}| \le 3$. Also note that $V_{RR}$ and $V_{BR}$ can only contain blue edges, since if there were a red edge $(u,v)$ in either set, a red copy of $H$ would be formed by vertices $1$, $2$, $u$, $v$. (Recall that the edge between 1 and 2 is red.) This implies that $|V_{RR}| \le 3$ and $|V_{BR}| \le 3$. 

Moreover, all edges between vertices in $V_{BB}$ and vertices in $V_{RB}$ must be red and all edges between vertices in $V_{RR}$ and vertices in $V_{BR}$ must be blue. In particular, suppose that there is a blue edge $(u,v)$ from $V_{RB}$ to $V_{BB}$. Then a blue copy of $H$ can be constructed using vertices $1$, $v$, $2$, $u$. Similarly, if there is a red edge $(u, v)$ from $V_{RR}$ to $V_{BR}$, then a red copy of $H$ can be constructed using vertices $1$, $v$, $2$, $u$.

Additionally, observe that any monochromatic triangle in $V_{RB} \cup V_{BR}$ would form a copy of $H$ with one of vertices 1 or 2. Since $R(K_3) = 6$, we have $|V_{RB}| + |V_{BR}| \le 5$.

Aided by the above observations, we use the following cases to establish that that $|G| \le 9$:
\begin{itemize}
\item $|V_{RB}| \ge 1, |V_{BR}| \ge 1$: Let $b,d$ be vertices such that $b \in V_{RB}$ and $d \in V_{BR}$. Since all edges between $V_{BB}$ and $V_{RB}$ are red, if there exists a red edge $(v_1,v_2) \in V_{BB}$, then vertices $1,b,v_1,$ and $v_2$ form a red copy of $H$. Thus, $V_{BB}$ cannot have any red edges. Since $V_{BB}$ cannot have any blue edges either, $|V_{BB}| \le 1$. Similarly, if there exists a blue edge $(v_1,v_2) \in V_{RR}$, then vertices $1,d,v_1,$ and $v_2$ form a blue copy of $H$. This means that $V_{RR}$ cannot have any blue edges. Because $V_{RR}$ also avoids red edges, $|V_{RR}| \le 1$. Thus, $|G| = 2 + |V_{BB}| + |V_{RR}| + (|V_{RB}| + |V_{BR}|) \le 2 + 1 + 1 + 5 = 9$.

\item $|V_{RB}| \ge 1, |V_{BR}| = 0$: By the argument we used in the first case, we know that $|V_{BB}| \le 1$. We claim that $|V_{RB} \cup V_{RR}| \le 6$. For the sake of contradiction, suppose otherwise. Since $R(K_3) = 6$, there must exist a monochromatic triangle in $V_{RB} \cup V_{RR}$. One can verify that unless this triangle is a blue triangle entirely contained within $V_{RR}$, the triangle along with either vertex 1 or vertex 2 forms a monochromatic copy of $H$. If this triangle is a blue triangle entirely contained within $V_{RR}$, then by removing one of the vertices in this triangle, we are left with at least $6$ vertices in $V_{RB} \cup V_{RR}$ and therefore another monochromatic triangle. However, the new triangle cannot be a blue triangle entirely contained within $V_{RR}$ (since $|V_{RR}| \le 3$ and we have removed a vertex from $V_{RR}$), a contradiction. We have $|G| = 2 + |V_{BB}| + |V_{BR}| + (|V_{RB}| + |V_{RR}|) \le 2 + 1 + 0 + 6 = 9$.

\item $|V_{RB}| = 0, |V_{BR}| \ge 1$: By the argument used in the first case, we know that $|V_{RR}| \le 1$. Thus, $|G| = 2 + |V_{BB}| + |V_{RB}| + |V_{RR}| + |V_{BR}| \le 2 + 3 + 0 + 1 + 3 = 9$.

\item $|V_{RB}| = 0, |V_{BR}| = 0$: We have $|G| = 2 + |V_{BB}| + |V_{RB}| + |V_{RR}| + |V_{BR}| \le 2 + 3 + 0 + 3 + 0 = 8$.
\end{itemize}
\end{proof}

The following upper bound on the ordered Ramsey number of the $(e_3,2)$-ordering of the 3-pan actually combines two-vertex anchoring with Theorem \ref{3pvc}, an application of the parent-graph technique. In doing so, it strengthens the bound provided by Theorem \ref{3pvc}.

\begin{theorem}
Let $H$ be the $(e_3,2)$-ordering of the 3-pan. Then $R_<(H) \le 9$.
\label{3pvc2}
\end{theorem}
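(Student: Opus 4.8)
The plan is to argue by contradiction. Suppose $G$ is an ordered $2$-coloring on $9$ vertices with no monochromatic copy of $H$. By Theorem~\ref{3pvc}, every ordering of the diamond graph contains a copy of $H$, so $G$ contains no monochromatic diamond and, a fortiori, no monochromatic $K_4$; this is the contribution of the parent-graph technique, and everything that follows is two-vertex anchoring.

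Anchor on the two smallest vertices $1$ and $2$, and assume without loss of generality that $(1,2)$ is red. Partition the remaining $7$ vertices into $V_{RR},V_{RB},V_{BR},V_{BB}$ according to the colors of their edges to $1$ and to $2$. The structural phase derives, from diamond-avoidance and from a handful of small forced copies of $H$ --- each obtained from a monochromatic $3$-pan whose $e_3$-vertex (one of the two non-hub triangle vertices) is the second smallest of the four chosen vertices --- the following:
\begin{itemize}
\item $V_{RR}$ induces only blue edges with the rest of $G$: a red edge inside $V_{RR}$, or from $V_{RR}$ to $V_{RB}\cup V_{BR}$, completes a red diamond with $\{1,2\}$, while a red edge from $V_{RR}$ to $V_{BB}$ completes a red copy of $H$ (red triangle $\{1,2,u\}$ with the pendant on $u$, with $2$ playing $e_3$). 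Hence $|V_{RR}|\le 2$, since otherwise $V_{RR}$ together with any further vertex of $G$ is a blue $K_4$.
\item $V_{BB}$ is red-complete and red-joined to $V_{RB}$ (a blue edge inside $V_{BB}$ is a blue diamond with $\{1,2\}$; a blue edge from $V_{BB}$ to $V_{RB}$ is a blue copy of $H$). Moreover $|V_{BB}|\le 2$: if $|V_{BB}|=3$ then $V_{RB}=\emptyset$ (else a red $K_4$) and $V_{BR}=\emptyset$ (a vertex of $V_{BR}$ is either red to two vertices of $V_{BB}$, completing a red $K_4$ or a red diamond via the third vertex of $V_{BB}$, or blue to two of them, completing a blue diamond with vertex $1$), forcing $|G|\le 5$.
\item both color classes of $V_{RB}$ are matchings, and likewise for $V_{BR}$ (a vertex with two red, resp.\ two blue, neighbors inside the block completes a red, resp.\ blue, diamond with vertex $1$ or vertex $2$); since $K_3$ is not a union of two matchings, $|V_{RB}|\le 2$ and $|V_{BR}|\le 2$.
\end{itemize}
As $|V_{RR}|+|V_{RB}|+|V_{BR}|+|V_{BB}|=7$ with each term at most $2$, every block is nonempty, exactly one has size $1$, and the other three have size $2$; this leaves four cases according to which block is the singleton.

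Ruling out these four near-extremal configurations is the main obstacle. The size bounds no longer finish the proof, and the difficulty is purely ordinal: a monochromatic $3$-pan gives a copy of $H$ only when one of its two non-hub triangle vertices occupies the second slot, so I cannot simply exhibit monochromatic pans. The leverage is that $V_{RR}$ is blue to everything outside itself: this forbids blue triangles in $\{3,\dots,9\}\setminus V_{RR}$ (otherwise a blue $K_4$ with a vertex of $V_{RR}$), which, when $|V_{RR}|=1$, together with $R(K_3)=6$ produces a red triangle whose vertices lie in known blocks and whose internal color pattern is thereby pinned down; symmetrically $V_{BB}$ is red-complete and red to $V_{RB}$. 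Because $V_{RR}$ and $V_{BB}$ are monochromatic toward large parts of $G$, whenever one of their vertices participates in a monochromatic triangle there are many admissible pendant vertices, so one can choose the pendant --- and which triangle vertex plays $e_3$ --- to steer $e_3$ into the second position. The bulk of the remaining work is the case-by-case verification: for each choice of the singleton block and each relative ordering of the four blocks (together with the few cross-edge colors left undetermined between $V_{RB},V_{BR},V_{BB}$), one exhibits a monochromatic $3$-pan of the correct order, yielding a monochromatic copy of $H$ and hence the contradiction that proves $R_<(H)\le 9$.
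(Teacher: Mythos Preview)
Your structural bounds are all correct (aside from a harmless slip: when $|V_{BB}|=3$ you get $|G|\le 7$, not $\le 5$). But your framing of the endgame is off, and you have made the remaining work look far harder than it is. No ordinal analysis is needed at all.

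If $|V_{RR}|=2$, say $a_1,a_2\in V_{RR}$, then $(a_1,a_2)$ is blue and both $a_i$ are blue to every vertex of $V_{RB}\cup V_{BR}\cup V_{BB}$ (your first bullet). Any blue edge $(x,y)$ among those five remaining vertices would give a blue $K_4$ on $\{a_1,a_2,x,y\}$; hence those five vertices induce a red $K_5$, a contradiction. So in fact $|V_{RR}|\le 1$, which disposes of three of your four cases simultaneously. In the surviving case $|V_{RR}|=1$, $|V_{RB}|=|V_{BR}|=|V_{BB}|=2$, look only at $V_{RB}\cup V_{BB}=\{b_1,b_2,d_1,d_2\}$: by your second bullet $(d_1,d_2)$ and all four $(d_i,b_j)$ are red, so $\{b_1,b_2,d_1,d_2\}$ carries five red edges and is a red diamond or $K_4$ regardless of the color of $(b_1,b_2)$. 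That finishes the proof, with no reference to the relative order of the blocks.

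The paper takes a different route: it anchors on $1$ and $n$ rather than $1$ and $2$. This is the natural choice because $H$ is invariant under order reversal (the $(e_3,2)$- and $(e_3,3)$-orderings coincide via the $e_3\leftrightarrow e_4$ symmetry). With that anchoring, $|V_{RR}|\le 1$ is immediate from diamond-avoidance, and the single genuinely ordinal step---a red edge inside $V_{RB}$ yields a red triangle on $\{1,v_1,v_2\}$ with pendant $n$, where $e_3\in\{v_1,v_2\}$ is automatically second smallest since $1<v_1,v_2<n$---forces $V_{RB}$ (and symmetrically $V_{BR}$) to be blue-complete and hence of size at most $2$. Your anchoring on $1,2$ also works, as shown above, but the paper's choice of extreme anchors is what makes the argument run without ever having to track where the blocks sit inside $\{3,\dots,9\}$.
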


\begin{proof}
Let $G$ be an ordered 2-coloring on $n$ vertices avoiding $H$. Without loss of generality, edge $(1,n)$ is red. Define $V_{RR}, V_{RB}, V_{BB}, V_{BR}$ to be the sets of vertices in $G$ connecting to vertex 1 by red, red, blue, and blue edges and to vertex $n$ by red, blue, blue, and red edges, respectively.

Since any monochromatic copy of the diamond graph contains $H$ (Theorem \ref{3pvc}), $G$ cannot contain any monochromatic copies of the diamond graph. This implies that $V_{RR}$ contains at most one vertex, since any two vertices in $V_{RR}$ along with vertices 1 and $n$ would yield a monochromatic copy of the diamond graph. Moreover, if $V_{BB}$ contains a blue edge $(v_1,v_2)$, then vertices $1,v_1,v_2,n$ form a blue copy of the diamond graph. Thus, $V_{BB}$ contains only red edges. Note that if $V_{RB}$ contains a red edge $(v_1,v_2)$, then since $1 < v_1,v_2 < n$, vertices $n,1,v_1,v_2$ form a red copy of $H$ with $v_1$ or $v_2$ in position $e_3$; interestingly, this is the only step in this proof where we construct a copy of $H$ directly, instead of through a copy of the diamond graph. Thus, $V_{RB}$ contains only blue edges. If $|V_{RB}| \ge 3$, then three vertices in $V_{RB}$ along with vertex $n$ form a blue copy of $K_4$, a contradiction. Thus, $|V_{RB}| \le 2$.

A similar argument shows that $V_{BR}$ contains only blue edges. Since any three vertices in $V_{BR}$ along with vertex 1 form a blue copy of $K_4$, $|V_{BR}| \le 2$.

Consider the following cases:
\begin{itemize}
\item $|V_{RB}|,|V_{BR}| \le 1$: Since $V_{BB}$ contains only red edges, $V_{BB}$ must contain fewer than four vertices. Otherwise, $V_{BB}$ would contain a red copy of $K_4$, a contradiction. Since $|V_{BB}| \le 3$, we have $|G| = 2 + |V_{RR}| + |V_{RB}| + |V_{BB}| + |V_{BR}| \le 2 + 1 + 1 + 3 + 1 = 8$.

\item $|V_{RB}| = 2$: For the sake of contradiction, assume that $|V_{BB}| \ge 2$. Let $u_1,u_2 \in V_{RB}$ and $v_1,v_2 \in V_{BB}$. Note that edge $(u_1,u_2)$ is blue. If edge $(u_1,v_1)$ is blue, then vertices $v_2,v_1,n,u_1$ form a blue copy of the diamond graph, which must contain a blue copy of $H$, a contradiction. Thus, edge $(u_1,v_1)$ must be red, and by similar reasoning, edges $(u_1,v_2),(u_2,v_1),(u_2,v_2)$ must all be red. Since edge $(v_1,v_2)$ is red, vertices $u_1,v_1,v_2,u_2$ form a red copy of the diamond graph, a contradiction. Thus, $|V_{BB}| \le 1$. We have $|G| = 2 + |V_{RR}| + |V_{RB}| + |V_{BB}| + |V_{BR}| \le 2 + 1 + 2 + 1 + 2 = 8$.

\item $|V_{BR}| = 2$: Again, for the sake of contradiction, assume that $|V_{BB}| \ge 2$. By similar reasoning to the above case, there exists a monochromatic copy of the diamond graph, implying that $|V_{BB}| \le 1$. We have $|G| = 2 + |V_{RR}| + |V_{RB}| + |V_{BB}| + |V_{BR}| \le 2 + 1 + 2 + 1 + 2 = 8$.
\end{itemize}
\end{proof}

Theorems \ref{dgva1}, \ref{dgva2}, \ref{dgvb1}, and \ref{dgvb2} consider each of the four $1$-orderings of the diamond graph.

\begin{theorem}
Let $H$ be the $(d_1,1)$-ordering of the diamond graph. Then $R_<(H) \le 14$.
\label{dgva1}
\end{theorem}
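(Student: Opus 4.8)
The plan is to apply the two-vertex anchoring technique with the two \emph{smallest} vertices of the coloring as anchors, exactly as in the proof of Theorem \ref{3pva1}. Let $G$ be an ordered 2-coloring avoiding $H$; recall that $d_1$ has degree $2$ in the diamond and that $d_1$ and $d_4$ are interchangeable vertices of it. The single elementary fact I would record at the outset, and then use repeatedly, is this: $G$ contains $H$ as soon as $G$ contains a monochromatic copy of the diamond graph whose smallest vertex has degree $2$ in that copy. In particular, any monochromatic $K_4$ in $G$ already yields $H$, since one may delete an edge incident to the smallest of its four vertices and obtain a diamond in which that vertex has degree $2$ and hence can play $d_1$.

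Next I would set up the anchoring. Without loss of generality the edge $(1,2)$ is red, and I partition $V(G)\setminus\{1,2\}$ into $V_{RR},V_{RB},V_{BR},V_{BB}$ according to the colors of the edges to vertex $1$ and to vertex $2$. The first two constraints are immediate. The set $V_{RR}$ contains no red edge, since a red edge of $V_{RR}$ together with vertices $1$ and $2$ spans a red $K_4$; hence $V_{RR}$ is monochromatically blue, and avoiding a blue $K_4$ it has at most $3$ vertices. Symmetrically, $V_{BB}$ contains no blue edge, because a blue edge $uv$ in $V_{BB}$ together with $1$ and $2$ spans a blue copy of the diamond missing the edge $(1,2)$, whose smallest vertex is $1$ and has degree $2$; thus $V_{BB}$ is monochromatically red and has at most $3$ vertices.

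The heart of the argument is the bound $|V_{RB}\cup V_{BR}|\le 5$. I would show that every monochromatic triangle inside $V_{RB}\cup V_{BR}$ forces a copy of $H$, and then invoke $R(K_3)=6$. There are a handful of sub-cases, split according to how the three vertices of the triangle are distributed between $V_{RB}$ and $V_{BR}$ and according to the triangle's color. If the triangle lies entirely in one of the two sets, then together with whichever of vertex $1$ or vertex $2$ is joined to that set by edges of the triangle's color, it spans a monochromatic $K_4$, which gives $H$. If the triangle has two vertices in one set and one in the other, then together with the appropriate anchor it spans a monochromatic copy of the diamond in which the anchor is the unique vertex missing an edge, and hence has degree $2$; since all three triangle vertices exceed $2$, the anchor is the smallest of the four, so it may play $d_1$ and $H$ appears. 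In every case we contradict the choice of $G$.

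Combining the three bounds gives $|G| = 2 + |V_{RR}| + |V_{BB}| + |V_{RB}\cup V_{BR}| \le 2+3+3+5 = 13$, so every ordered $2$-coloring on $14$ vertices contains $H$ and $R_<(H)\le 14$. I expect the main obstacle to be the bookkeeping in the triangle case: one must enumerate the sub-cases, identify in each the correct four vertices, and verify both that the five edges among them form the monochromatic diamond claimed and that the vertex assigned to $d_1$ is genuinely the minimum of the four (so the order-label-$1$ requirement is met). A secondary point to watch is that the ``monochromatic $K_4$'' observation should be stated up front in the form ``$K_4$ minus an edge at the minimum vertex is a copy of $H$,'' so that it does not have to be re-derived in each appearance.
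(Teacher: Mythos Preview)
Your proposal is correct and follows essentially the same approach as the paper's proof: two-vertex anchoring on vertices $1$ and $2$, the bounds $|V_{RR}|,|V_{BB}|\le 3$ from the absence of monochromatic $K_4$'s/diamonds with $1$ as a degree-$2$ vertex, and the bound $|V_{RB}\cup V_{BR}|\le 5$ via $R(K_3)=6$ together with the same case split on how a monochromatic triangle distributes across the two parts. The only cosmetic differences are that you fix the color of $(1,2)$ up front (the paper does not bother) and that you package the recurring observation ``monochromatic diamond whose minimum vertex has degree $2$ yields $H$'' as a single lemma to be reused, which is a nice expository choice.
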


\begin{proof}
Suppose that there exists a 2-coloring $G$ containing no monochromatic copies of $H$. Define $V_{RR}, V_{RB}, V_{BB}, V_{BR}$ to be the sets of vertices in $G$ connecting to vertex 1 by red, red, blue, and blue edges and to vertex 2 by red, blue, blue, and red edges, respectively.

Note that all edges between pairs of vertices in $V_{RR}$ must be blue. If there exists a red edge between vertices $v_1,v_2 \in V_{RR}$, then vertices $1$, $v_1$, $v_2$, and $2$ must form a copy of $H$. Since $V_{RR}$'s edges are all blue, $|V_{RR}| \le 3$. By symmetric reasoning, $V_{BB}$ contains only red edges, and $|V_{BB}| \le 3$. If $|V_{RB} \cup V_{BR}| \ge 6$, then since $R(K_3) = 6$, there must exist vertices $v_1,v_2,v_3 \in |B \cup C|$ forming a monochromatic copy of $K_3$. Consider the following cases:
\begin{itemize}
\item $v_1,v_2,v_3$ all belong to only one of $V_{RB}$ or $V_{BR}$: Without loss of generality, $v_1,v_2,v_3 \in V_{RB}$. If the three vertices form a red copy of $K_3$, then vertices $1,v_1,v_2,v_3$ form a red copy of $K_4$. If the three vertices form a blue copy of $K_3$, then vertices $2,v_1,v_2,v_3$ form a blue copy of $K_4$.

\item $\{v_1,v_2,v_3\}$ is neither a subset of $V_{RB}$ or of $V_{BR}$: Without loss of generality, assume $v_1,v_2 \in V_{RB}$ and $v_3 \in V_{BR}$. If $v_1,v_2,v_3$ form a red copy of $K_3$, then vertices $1,v_1,v_2,v_3$ form a red copy of $H$. If they form a blue copy of $K_3$, then vertices $2,v_1,v_2,v_3$ form a blue copy of $H$.
\end{itemize}
Therefore, $|V_{RB} \cup V_{BR}| = |V_{RB}| + |V_{BR}| \le 5$. Combining these results,
$|G| = 2 + |V_{RR}| + |V_{RB}| + |V_{BR}| + |V_{BB}|
\le 2 + 3 + 5 + 3
\le 13$.
\end{proof}

\begin{theorem}
Let $H$ be a $(d_1,2)$-ordering of the diamond graph. Then $R_<(H) \le 16$.
\label{dgva2}
\end{theorem}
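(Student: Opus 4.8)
The plan is to prove this by two-vertex anchoring on the two smallest vertices, $1$ and $2$, of an ordered $2$-coloring $G$ that avoids $H$. The combinatorial input is a characterization of which monochromatic subgraphs force $H$: a monochromatic $K_4$ always contains $H$ (delete any edge to obtain a diamond, and let the second-smallest of the four vertices play the role of $d_1$), whereas a monochromatic diamond on vertices $w_1 < w_2 < w_3 < w_4$ contains $H$ if and only if its missing edge is incident to $w_2$ — because $d_1$ has degree $2$ and order-label $2$, so in any valid copy it must be played by $w_2$, which must then be an endpoint of the missing edge. Since exchanging the two colors fixes $H$, I may assume edge $(1,2)$ is red.

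Next I would sort the remaining vertices (all of order-label at least $3$) into $V_{RR}, V_{RB}, V_{BB}, V_{BR}$ by their colors to $1$ and to $2$ (red/red, red/blue, blue/blue, blue/red), and extract several structural facts by adjoining one or both anchors to a small configuration inside a set. A red edge inside $V_{RR}$ together with $1, 2$ gives a red $K_4$, so $V_{RR}$ spans only blue edges and $|V_{RR}| \le 3$; symmetrically (here the missing edge $(1,2)$ is incident to $w_2 = 2$) $V_{BB}$ spans only red edges and $|V_{BB}| \le 3$. A monochromatic triangle inside $V_{RB}$ extends through vertex $1$ (if it is red) or through vertex $2$ (if it is blue) to a monochromatic $K_4$, so $V_{RB}$ is triangle-free in each color and $|V_{RB}| \le 5$ by $R(K_3) = 6$; likewise $|V_{BR}| \le 5$. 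Finally, every $V_{RR}$–$V_{RB}$ edge is blue, since a red such edge $xy$ together with $1, 2$ forms a red diamond on $\{1,2,x,y\}$ missing the edge $(2,y)$, which is incident to $w_2 = 2$; consequently, if $|V_{RR}| \ge 2$ then every edge inside $V_{RB}$ is red (else a blue $K_4$ appears on two vertices of $V_{RR}$ and two of $V_{RB}$), forcing $|V_{RB}| \le 3$.

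These facts already give $|G| = 2 + |V_{RR}| + |V_{RB}| + |V_{BB}| + |V_{BR}| \le 2 + 6 + 3 + 5 = 16$ in every case, so the remaining work is to squeeze out one more vertex in the tight configuration where $|V_{RR}| + |V_{RB}| = 6$, $|V_{BB}| = 3$, and $|V_{BR}| = 5$. Here $V_{BB}$ is a red triangle, $V_{BR}$ is the unique triangle-free two-coloring of $K_5$, and I would derive a contradiction by analyzing the edges between $V_{BR}$ and $V_{BB}$ (and to the anchors $1$ and $2$) together with an order-sensitive count of the monochromatic triangles that $R(K_3) = 6$ produces inside $V_{RB} \cup V_{BR}$.

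The hard part will be exactly this order-sensitive triangle analysis, which is why the bound obtained, $16$, is larger than the $14$ of the $(d_1,1)$-ordering in Theorem~\ref{dgva1}. There, every monochromatic triangle in $V_{RB} \cup V_{BR}$ extends to a monochromatic $K_4$; here a monochromatic triangle whose \emph{smallest} vertex shares a part ($V_{RB}$ or $V_{BR}$) with a second vertex of the triangle extends only to a monochromatic diamond whose missing edge need not touch that smallest vertex $w_2$, so it does not by itself force $H$. Ruling out every such \emph{bad} configuration requires tracking which of $V_{RB}$, $V_{BR}$ contains the smallest vertex of each monochromatic triangle, and this bookkeeping — rather than any single clever step — is the crux of the proof.
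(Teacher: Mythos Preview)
Your setup is exactly the paper's: two-vertex anchoring on $1$ and $2$, the partition into $V_{RR},V_{RB},V_{BB},V_{BR}$, and the same structural facts, all correctly justified. Your criterion for when a monochromatic diamond on $w_1<w_2<w_3<w_4$ realizes $H$ (missing edge incident to $w_2$) is right and cleanly stated. One slip: once $|V_{RR}|\ge 2$ forces every edge inside $V_{RB}$ to be red, your own triangle-freeness of $V_{RB}$ gives $|V_{RB}|\le 2$, not $\le 3$; and $|V_{RR}|=3$ together with any vertex of $V_{RB}$ already makes a blue $K_4$. So the only way to reach $|V_{RR}|+|V_{RB}|=6$ is $(|V_{RR}|,|V_{RB}|)=(1,5)$, which narrows your tight case.

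The genuine gap is that tight case. You correctly identify that only a \emph{good} monochromatic triangle (lonely vertex in one of $V_{RB},V_{BR}$, smaller than the two vertices in the other) forces $H$, but your plan to find one via $V_{BR}$--$V_{BB}$ edges and an ``order-sensitive triangle count'' is not fleshed out, and the detour through $V_{BB}$ is unnecessary. The paper closes this with a single step rather than bookkeeping: take the \emph{smallest} vertex $v$ of $V_{RB}\cup V_{BR}$, say $v\in V_{RB}$; pigeonhole gives three edges from $v$ into $V_{BR}$ of a common color, and since $V_{BR}$ contains no monochromatic triangle, two of those three neighbors are joined by an edge of that same color. The resulting monochromatic triangle has $v$ as its lonely vertex and $v$ is smaller than both others, so it is automatically good and extends through vertex $1$ or $2$ to a copy of $H$. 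This yields $|V_{RB}|+|V_{BR}|\le 9$ whenever $|V_{RR}|=1$, and $|G|\le 15$ follows in every case without using $V_{BB}$ at all.
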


\begin{proof}
Let $G$ be an ordered 2-coloring avoiding $H$. Without loss of generality, edge $(1,2)$ is red. Define $V_{RR}, V_{RB}, V_{BB}, V_{BR}$ to be the sets of vertices in $G$ connecting to vertex 1 by red, red, blue, and blue edges and to vertex 2 by red, blue, blue, and red edges, respectively.

If $V_{BR}$ contains a monochromatic triangle, then the vertices of the triangle form either a blue copy of $K_4$ with vertex 1 or a red copy of $K_4$ with vertex 2. Thus, $V_{BR}$ must avoid triangles. This means that $|V_{BR}| \le R(K_3) - 1 = 5$. By similar reasoning, $V_{RB}$ avoids triangles, and $|V_{RB}| \le 5$.

If $V_{BB}$ contains a blue edge between some two vertices $u$ and $v$, then vertices 2, $u$, $v$, and 1 form a copy of $H$. Thus, all edges between vertices in $V_{BB}$ must be red, implying that $|V_{BB}| \le 3$.

By similar reasoning, all edges in $V_{RR}$ must be blue, and $|V_{RR}| \le 3$. Moreover, if there is a red edge between vertices $u \in V_{RB}$ and $v \in V_{RR}$, then vertices 2, $v$, 1, and $u$ form a copy of $H$. This means that all edges between $V_{RB}$ and $V_{RR}$ are blue. Now note that $|G| \le 15$ in all possible cases:

\begin{itemize}
\item $|V_{RB}| = 0$: We get $|G| = 2 + |V_{BB}| + |V_{RB}| + |V_{RR}| + |V_{BR}| \le 2 + 3 + 0 + 3 + 5 = 13$.

\item $|V_{RB}| \ge 1, |V_{RR}| = 0$: We get $|G| = 2 + |V_{BB}| + |V_{RB}| + |V_{RR}| + |V_{BR}| \le 2 + 3 + 5 + 0 + 5 = 15$.

\item $|V_{RB}| \ge 1, |V_{RR}| = 1$: Suppose for the sake of contradiction that $|V_{RB}| + |V_{BR}| \ge 10$. Since $|V_{RB}| \le 5$ and $|V_{BR}| \le 5$, this can only be true when $|V_{RB}| = |V_{BR}| = 5$. Define a \emph{good triangle} as a monochromatic triangle containing one vertex in either $V_{RB}$ or $V_{BR}$ and the other two vertices in $V_{BR}$ or $V_{RB}$, respectively, both of which have order-label greater than does the single vertex. Let $v$ be the smallest vertex in $V_{RB} \cup V_{BR}$. If $v \in V_{RB}$, then by the Pigeonhole Principle, at least three edges between $v$ and vertices in $V_{BR}$ are the same color, blue without loss of generality. Let the three vertices be $u_1$, $u_2$, and $u_3$. Since $V_{BR}$ avoids triangles, at least one of the edges connecting $u_1$, $u_2$, and $u_3$ must be blue. Without loss of generality, the edge $(u_1,u_2)$ is blue. Then $v$, $u_1$, and $u_2$ form a good triangle. By similar reasoning, if $v \in V_{BR}$, there must be a good triangle.

Note that the existence of a good triangle implies the existence of a copy of $H$ formed by the good triangle and one of vertices 1 or 2. Therefore, we have established $|V_{RB}| + |V_{RB}| \le 9$, implying that $|G| = 2 + |V_{BB}| + |V_{RR}| + (|V_{RB}| + |V_{BR}|) \le 2 + 3 + 1 + 9 = 15$.

\item $|V_{RB}| \ge 1, |V_{RR}| \ge 2$: Since $V_{RR}$ contains only blue edges and edges between $V_{RB}$ and $V_{RR}$ are all blue, $V_{RR}$ must avoid triangles, lest $G$ contains a blue $K_4$. Thus, $|V_{RR}| = 2$. Moreover, if $V_{RB}$ contains a blue edge, then the four vertices comprising the blue edge in $V_{RB}$ and the elements of $V_{RR}$ form a blue copy of $K_4$. Thus, all edges in $V_{RB}$ must be red. Since $V_{RB}$ must avoid red triangles, $|V_{RB}| \le 2$. Hence $|G| = 2 + |V_{BB}| + |V_{RB}| + |V_{RR}| + |V_{BR}| \le 2 + 3 + 2 + 2 + 5 = 14$.
\end{itemize}
\end{proof}

\begin{theorem}
Let $H$ be the $(d_2,1)$-ordering of the diamond graph. Then $R_<(H) \le 13$.
\label{dgvb1}
\end{theorem}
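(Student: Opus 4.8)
The plan is to follow the two-vertex anchoring scheme of the previous theorems. First reformulate the target: since $d_2$ has degree $3$ and the diamond admits the automorphism swapping $d_2\leftrightarrow d_3$ and $d_1\leftrightarrow d_4$, an ordered 2-coloring $G$ contains $H$ exactly when it contains a monochromatic copy of the diamond graph whose least-labeled vertex has degree $3$ in that copy. Two consequences will be used throughout. (E1): every monochromatic $K_4$ in $G$ contains $H$, since one may delete from it any edge not meeting the least of its four vertices. (E2): if $c$ is a color and $x<w$ are vertices such that $x$ is joined in color $c$ to each of three further vertices $w,y,z$ and $w$ is joined in color $c$ to each of $y,z$, then $\{x,w,y,z\}$ contains a monochromatic copy of $H$; for either $(y,z)$ also has color $c$, giving a $K_4$, or the color-$c$ edges on $\{x,w,y,z\}$ form a diamond whose least vertex $x$ has degree $3$.

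Now suppose $G$ avoids $H$; after possibly interchanging the two colors we may assume the edge $(1,2)$ is red. Define $V_{RR},V_{RB},V_{BB},V_{BR}$ as the sets of vertices joined to vertex $1$ by a red, red, blue, blue edge and to vertex $2$ by a red, blue, blue, red edge, respectively, so that $|G|=2+|V_{RR}|+|V_{RB}|+|V_{BB}|+|V_{BR}|$. Two distinct $u,v\in V_{RR}$ produce, by (E2) with $x=1$ and $w=2$, a red diamond on $\{1,2,u,v\}$ in which vertex $1$ is a hub, so $|V_{RR}|\le 1$. Next, vertex $1$ is red-joined to every vertex of $N_R(1)=\{2\}\cup V_{RR}\cup V_{RB}$ and is the least vertex of $G$, so (E2) forbids a red 3-path inside $N_R(1)$; hence the red edges inside $V_{RB}$ form a matching. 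Symmetrically, vertex $2$ is blue-joined to every vertex of $N_B(2)=V_{RB}\cup V_{BB}$, each of which exceeds $2$, so the blue edges inside $V_{RB}$ form a matching. Since $V_{RB}$ thus induces a complete 2-colored graph both of whose color classes are matchings and a triangle cannot be covered by two matchings, $|V_{RB}|\le 2$; symmetrically $|V_{BR}|\le 2$.

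It remains to bound $|V_{BB}|$, which I expect to be the main obstacle. By (E2) with $x=1$ and color blue the blue edges inside $V_{BB}$ form a matching, so the red graph on $V_{BB}$ is the complement of a matching; by (E1) it is $K_4$-free, and by the reformulation it contains no diamond whose least vertex has degree $3$. The complement of a matching is $K_4$-free only on at most six vertices; on exactly six it must be $K_{2,2,2}$ and on exactly five it must be $K_5$ minus two disjoint edges, and in each of these two cases one checks that among the few induced diamonds there are two that place jointly unsatisfiable demands on which vertex may be the least. Hence $|V_{BB}|\le 4$, so $|G|\le 2+1+2+4+2=11$ and $R_<(H)\le 12$, which in particular establishes the stated bound. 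The delicate point is exactly this bound on $|V_{BB}|$: unlike $V_{RR}$, $V_{RB}$, and $V_{BR}$, the set $V_{BB}$ is constrained on the red side only through the order-labels of the vertices appearing in the forced monochromatic diamonds, so a short but genuinely order-sensitive argument there is unavoidable. If one prefers, the crude estimate $|V_{BB}|\le R(P_3,K_4)-1=6$ together with one additional case split also suffices to give $R_<(H)\le 13$.
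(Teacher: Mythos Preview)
Your proof is correct and in fact establishes the stronger bound $R_<(H)\le 12$. The overall framework is identical to the paper's: two-vertex anchoring at vertices $1$ and $2$, with the same bounds $|V_{RR}|\le 1$ and $|V_{RB}|,|V_{BR}|\le 2$. The difference lies entirely in the treatment of $V_{BB}$. The paper argues directly that six vertices in $V_{BB}$ force a red copy of $H$ (picking the smallest vertex $v_1$ and three others lying in distinct blue components), obtaining $|V_{BB}|\le 5$ and hence $R_<(H)\le 13$. You instead observe that the red graph on $V_{BB}$ is the complement of a matching, is $K_4$-free, and must have every induced diamond place its minimum at a degree-$2$ vertex; checking that these constraints are jointly unsatisfiable on the two extremal structures ($K_{2,2,2}$ on six vertices, $K_5$ minus two disjoint edges on five) gives the sharper $|V_{BB}|\le 4$. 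This structural route buys you one extra vertex and is arguably cleaner, at the cost of the small case analysis you defer with ``one checks.''

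One minor point: your statement of (E2) only assumes $x<w$, but the conclusion that $x$ is the least vertex of the diamond requires $x<y,z$ as well. This is harmless here since in every application $x\in\{1,2\}$ and $y,z$ lie in one of the $V$-sets, hence $y,z\ge 3$; but (E2) should be stated with that hypothesis. Also, your closing remark about the ``crude estimate $|V_{BB}|\le R(P_3,K_4)-1$'' does not quite reach $13$ on its own (it gives $14$), so the promised extra case split would need to be spelled out if you wanted that as a genuine alternative; as things stand your main argument already supersedes it.
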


\begin{proof}
Suppose that there exists a 2-coloring $G$ avoiding $H$. Without loss of generality, edge $(1,2)$ is red. Define $V_{RR}, V_{RB}, V_{BB}, V_{BR}$ to be the sets of vertices in $G$ connecting to vertex 1 by red, red, blue, and blue edges and to vertex 2 by red, blue, blue, and red edges, respectively.

If $|V_{RR}| \ge 2$, then there exist two vertices $v_1,v_2$ both connected to vertices $1$ and $2$ by red edges. Note that the four vertices $v_1,1,2,v_2$ form a red copy of $H$. It follows that $|V_{RR}| \le 1$. If $|V_{RB}| \ge 3$, then there exist three vertices $v_1,v_2,v_3 \in V_{RB}$ such that edges $(v_1,v_2)$ and $(v_2,v_3)$ are the same color. If the two edges are red, then vertices $v_1, 1, v_3, v_2$ form a red copy of $H$. If the two edges are blue, then vertices $v_1,2,v_2,v_3$ form a blue copy of $H$. It follows that $|V_{RB}| \le 2$. By the same argument, except with red and blue swapping roles, we have that $|V_{BR}| \le 2$. 

Suppose that $|V_{BB}| \ge 6$. If there exist three vertices $v_1,v_2,v_3 \in V_{BB}$ such that edges $(v_1,v_2)$ and $(v_2,v_3)$ are both blue, then vertices $v_1, 1,v_3,v_2$ form a blue copy of $H$. Thus no two blue edges in $V_{BB}$ share an endpoint, implying that each blue connected component contains at most two vertices. Let $v_1$ be the smallest vertex in $V_{BB}$, and select $v_2, v_3, v_4$ to not be in $v_1$'s blue connected component. Since $v_2, v_3, v_4$ cannot all be in the same blue connected component, without loss of generality, $v_4$ is in a different connected component from both $v_2$ and $v_3$. Thus $(v_1,v_2)$, $(v_1,v_3)$, $(v_1,v_4)$, $(v_2,v_4)$, and $(v_3,v_4)$ are all red, vertices $v_2,v_1,v_3,v_4$ form a red copy of $H$. Hence, $|V_{BB}| \le 5$.

Combining these results,
$|G| = 2 + |V_{RR}| + |V_{RB}| + |V_{BR}| + |V_{BB}|
\le 2 + 1 + 2 + 2 + 5
\le 12$.
\end{proof}

\begin{theorem}
Let $H$ be a $(d_2,2)$-ordering of the diamond graph. Then $R_<(H) \le 17$.
\label{dgvb2}
\end{theorem}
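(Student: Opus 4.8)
The plan is to apply the two-vertex anchoring technique with anchor vertices $1$ and $2$, exactly as in the proofs of Theorems \ref{dgvb1} and \ref{dgva2}. Suppose $G$ is an ordered $2$-coloring avoiding $H$, and without loss of generality let the edge $(1,2)$ be red. Partition the remaining vertices into $V_{RR}, V_{RB}, V_{BB}, V_{BR}$, the sets of vertices joined to vertex $1$ by a red, red, blue, blue edge and to vertex $2$ by a red, blue, blue, red edge, respectively. The bound will follow by showing each of these four sets is small and summing, using throughout that a copy of $H$ is exactly a monochromatic diamond (or $K_4$) on four vertices whose second-smallest vertex has degree $3$ in it.

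The sets $V_{RR}$, $V_{RB}$, $V_{BR}$ are the easy part. If $v_1<v_2$ both lie in $V_{RR}$, then on the four vertices $1<2<v_1<v_2$ the second-smallest vertex, vertex $2$, is joined by red edges to all of $1,v_1,v_2$, and $1$ is joined by red to $v_1,v_2$; so these four vertices span a red diamond (or $K_4$) in which vertex $2$ has degree $3$, i.e.\ a copy of $H$, regardless of the color of $(v_1,v_2)$. Hence $|V_{RR}|\le 1$. If $v_1<v_2<v_3$ all lie in $V_{RB}$ with $(v_1,v_2),(v_1,v_3)$ both red, then $\{1,v_1,v_2,v_3\}$ contains a red copy of $H$ with $v_1$ in the degree-$3$ role; if both are blue, then $\{2,v_1,v_2,v_3\}$ contains a blue such copy. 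Thus the edges from $v_1$ to larger members of $V_{RB}$ cannot repeat a color, which forces $|V_{RB}|\le 3$, and symmetrically $|V_{BR}|\le 3$.

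Bounding $|V_{BB}|$ is the main obstacle, since both anchors meet $V_{BB}$ only in blue edges and so impose no red constraint inside it. Two facts remain available. First, adding vertex $1$ as a fourth vertex: if some $v\in V_{BB}$ had blue edges to two larger vertices $u<w$ of $V_{BB}$, then $\{1,v,u,w\}$ would be a blue copy of $H$ with $v$ in the degree-$3$ role; hence every vertex of $V_{BB}$ has at most one blue edge to a larger vertex of $V_{BB}$, so in particular each of the smallest vertices of $V_{BB}$ is joined by red edges to all but at most one of the vertices above it. Second, $V_{BB}$ contains no monochromatic $K_4$, since a monochromatic $K_4$ on $w_1<w_2<w_3<w_4$ already contains a copy of $H$ (take $w_2$ in the degree-$3$ role). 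I would combine these via a short case analysis on the color of the edge joining the two smallest vertices of $V_{BB}$: in each case the first fact guarantees two common red neighbours of a suitable low pair, and those four vertices then span a red diamond whose second-smallest vertex has degree $3$ — again a copy of $H$. This pins $|V_{BB}|$ down to a small constant.

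Summing, $|G| = 2 + |V_{RR}| + |V_{RB}| + |V_{BB}| + |V_{BR}|$ is at most $16$, so $R_<(H)\le 17$; this already improves on the trivial bound $R_<(H)\le R(K_4)=18$, which follows because every monochromatic $K_4$ contains every $1$-ordering of the diamond. The delicate point throughout is the interplay between the unordered containment structure of the diamond and the requirement that the distinguished vertex be second-smallest, which is precisely what the degree-$3$-in-second-position bookkeeping in the $V_{BB}$ analysis is designed to track.
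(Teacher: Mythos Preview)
Your proposal is correct but takes a genuinely different route from the paper. The paper anchors on vertices $1$ and $n$ rather than $1$ and $2$; with those extreme anchors, any blue edge inside $V_{BB}$ combines with $1$ and $n$ to give a blue diamond whose second-smallest vertex has degree $3$, so $V_{BB}$ is entirely red and $|V_{BB}|\le 3$ (symmetrically $|V_{RR}|\le 3$), while the work shifts to $V_{RB}$ and $V_{BR}$, where the paper forbids certain ordered monochromatic $P_3$'s and deduces $|V_{RB}|,|V_{BR}|\le 4$, for a total of $16$. Your anchoring at $1,2$ reverses the difficulty: $|V_{RR}|\le 1$ and $|V_{RB}|,|V_{BR}|\le 3$ drop out immediately, and all the work lands in $V_{BB}$. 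Your $V_{BB}$ sketch is sound and in fact sharper than you claim: writing $k=|V_{BB}|$, the smallest vertex $w_1$ has at least $k-2$ red neighbours above it (at most one blue edge upward); the least such neighbour $s$ then has at least $k-4$ red edges into the remaining red neighbours of $w_1$ (the same one-blue-up rule applied to $s$), so once $k\ge 6$ there exist $u,v$ with $\{w_1,s,u,v\}$ a red copy of $H$ --- no case split on the colour of $(w_1,w_2)$ is even needed. Hence $|V_{BB}|\le 5$, and your anchoring actually yields $|G|\le 2+1+3+5+3=14$ and $R_<(H)\le 15$, improving on the paper's bound.
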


\begin{proof}
Let $G$ be an ordered 2-coloring on $n$ vertices avoiding $H$. Without loss of generality, edge $(1,n)$ is red. Define $V_{RR}, V_{RB}, V_{BB}, V_{BR}$ as the sets of vertices in $G$ connecting to vertex 1 by red, red, blue, and blue edges and to vertex $n$ by red, blue, blue, and red edges, respectively.

Note that if $V_{BB}$ contains any blue edges, then there exists a copy of $H$ in $G$. Thus, $V_{BB}$ must contain only red edges. In order for $V_{BB}$ to avoid $K_4$, we get that $|V_{BB}| \le 3$. By similar reasoning, all edges in $V_{RR}$ must be blue, and $|V_{RR}| \le 3$.

Let $P_{3a}$ be the ordered 3-vertex path 1---2---3 and $P_{3b}$ be the ordered path 2---1---3. Note that $V_{RB}$ cannot contain a blue copy of $P_{3a}$ or a red copy of $P_{3b}$, since such copies could be combined with vertices $n$ or $1$, respectively, in order to obtain copies of $H$. We will use this to show that $|V_{RB}| \le 4$.

Suppose that there exists an ordered 2-coloring on 5 vertices avoiding red copies of $P_{3a}$ and blue copies of $P_{3b}$. This 2-coloring must, by extension, avoid monochromatic copies of $K_3$. The only 2-colorings on 5 vertices avoiding $K_3$ are those in which the set of red edges and the set of blue edges each form a 5-vertex cycle. Let the blue edges in our 2-coloring be $(v_1,v_2),(v_2,v_3),(v_3,v_4),(v_4,v_5),$ and $(v_5,v_1)$. Without loss of generality, $v_1 < v_2$. Since $V_{RB}$ avoids blue copies of $P_{3a}$, no path of three vertices can be in increasing or decreasing order, implying that $v_2 > v_3$, $v_3 < v_4$, $v_4 > v_5$, and $v_5 < v_1$. However, this means that $v_5$, $v_1$, and $v_2$ form a blue copy of $P_{3a}$, a contradiction. Thus, $|V_{RB}| \le 4$.

Similarly, observe that $V_{BR}$ cannot contain a red copy of $P_{3a}$ or a blue copy of $P_{3b}$. By duplicating the above argument, we have $|V_{BR}| \le 4$.

We have
$|G| = 2 + |V_{BB}| + |V_{RB}| + |V_{RR}| + |V_{BR}|
\le 2 + 3 + 4 + 3 + 4
= 16$.
\end{proof}

The remaining results in this section prove upper bounds for the not-yet-discussed $1$-orderings of the $3$-pan. These results are proven using variants of classic single-vertex anchoring.

\begin{theorem}
Let $H$ be a $(e_1,2)$-ordering of the 3-pan. Then $R_<(H) \le 10$.
\label{3pva2}
\end{theorem}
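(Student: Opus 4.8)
The plan is to argue by single-vertex anchoring at vertex $1$. First note that in any copy of $H$ (the $(e_1,2)$-ordering of the $3$-pan) the pendant $e_1$ is forced into position $2$: among the four vertices of the copy, sorted as $w_1<w_2<w_3<w_4$, the vertex playing $e_1$ carries order-label $2$ and so can occupy neither position $1$, nor $3$, nor $4$. It is also useful to record that any monochromatic $K_4$ in an ordered $2$-coloring already contains $H$: on vertices $w_1<w_2<w_3<w_4$ take the triangle $\{w_1,w_3,w_4\}$ and hang $w_2$ off $w_1$; then $w_2$ is the pendant and it sits in position $2$. Hence, if $G$ is an ordered $2$-coloring avoiding $H$, then $G$ contains no monochromatic $K_4$.

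Now let $G$ avoid $H$, and let $R$ (resp.\ $B$) be the set of vertices joined to vertex $1$ by a red (resp.\ blue) edge, so $|R|+|B|=|G|-1$. The key step is a ``push-down'' lemma: if $\{u,v\}$ is a red edge with $u<v$ and $u,v\in R$, then $u=\min R$. Otherwise $p:=\min R$ satisfies $1<p<u<v$; the vertices $\{1,u,v\}$ form a red triangle (all three pairs are red since $u,v\in R$ and $\{u,v\}$ is red), and $\{1,p\}$ is red since $p\in R$, so hanging $p$ off vertex $1$ gives a red $3$-pan whose vertices in increasing order are $1,p,u,v$ --- the pendant $p$ is in position $2$, a copy of $H$, a contradiction. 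Consequently every red edge inside $R$ is incident to $\min R$, so $R\setminus\{\min R\}$ spans blue edges only; since $G$ has no blue $K_4$ this forces $|R\setminus\{\min R\}|\le 3$, hence $|R|\le 4$. Exchanging the two colors, the identical argument gives $|B|\le 4$.

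Putting these together, $|G|-1=|R|+|B|\le 8$, so $|G|\le 9$; in particular no $10$-vertex ordered $2$-coloring can avoid $H$, which is exactly $R_<(H)\le 10$.

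The argument above is short and has no genuinely hard step; the only things one must see are that anchoring at vertex $1$ is the right move and that, because $e_1$ is pinned to position $2$, copies of $H$ should be produced by hanging the \emph{smallest} vertex of a suitable neighborhood off of vertex $1$. If one wanted to improve the bound below $10$, the remaining obstacle would be the extremal configuration $|R|=|B|=4$, in which $R\setminus\{\min R\}$ is a blue triangle and $B\setminus\{\min B\}$ a red triangle; ruling this out requires a short extra case analysis on how the two triangles interleave in the order and on the colors of the edges between them, which I would not carry out here.
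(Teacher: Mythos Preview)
Your proof is correct and follows essentially the same approach as the paper's: anchor at vertex~$1$, strip off the smallest vertex from each monochromatic neighborhood, observe that the remainder must be monochromatic in the opposite color (lest the stripped vertex serve as the pendant in position~$2$), and bound each remainder by~$3$ via the absence of a monochromatic~$K_4$. The only cosmetic difference is that the paper fixes the color of edge $(1,2)$ without loss of generality so that vertex~$2$ plays the role of your $\min R$, whereas you treat the two colors symmetrically; the resulting bound $|G|\le 9$ is identical.
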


\begin{proof}
Let $G$ be an ordered 2-coloring avoiding $H$. Without loss of generality, the edge between vertices 1 and 2 in $G$ is red. Let $A$ be the set of vertices in $G\backslash \{2\}$ connected to vertex 1 by a red edge, and let $B$ be the set of vertices in $G$ that are connected to vertex 1 by a blue edge.

Observe that $A$ cannot contain a red edge, since vertices $2$ and $1$ could be combined with such an edge to form a copy of $H$. Therefore, in order for $A$ to avoid $K_4$, we get $|A| \le 3$. Now let $u$ be the smallest vertex in $B$, and define the set $C$ as $B \backslash \{u\}$. Observe that $C$ must avoid blue edges, since vertices $u$ and $1$ could be combined with such an edge to form a copy of $H$. Thus $C$ has size at most $3$. Combining our results, we have
$|G| = 3 + |A| + |C|
\le 3 + 3 + 3
= 9$.
\end{proof}

\begin{theorem}
Let $H$ be the $(e_2,1)$-ordering of the 3-pan. Then $R_<(H) = 7$.
\label{3pvb1}
\end{theorem}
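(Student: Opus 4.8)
The plan is to prove matching bounds. For the lower bound $R_<(H)\ge 7$, I would exhibit a $2$-coloring of $K_6$ containing no monochromatic $3$-pan at all: let the red edges form $K_{3,3}$ and the blue edges form two disjoint triangles. The red graph is triangle-free, and a blue $3$-pan would require a blue edge leaving a blue triangle, of which there are none; so equipping these six vertices with any ordering gives an ordered $2$-coloring avoiding $H$, whence $R_<(H)\ge 7$.

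For the upper bound $R_<(H)\le 7$, I would use single-vertex anchoring at the globally smallest vertex $1$. Let $G$ be an ordered $2$-coloring on $7$ vertices avoiding $H$, and let $V_R$ and $V_B$ be the sets of vertices joined to $1$ by a red, respectively blue, edge; by pigeonhole one of them — say $V_R$ — has size at least $3$. The first step is to note that if $V_R$ contains a red edge $uv$, then any third vertex $w\in V_R$ yields a red triangle $1uv$ with pendant $w$ at $1$, a red copy of $H$ because $1$ is the global minimum. Hence $V_R$ is an all-blue clique, and if $|V_R|\ge 4$ it contains a blue $K_4$ whose four vertices $p<q<s<t$ give the blue copy $(e_2,e_3,e_4,e_1)=(p,q,s,t)$ of $H$. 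So $|V_R|=3$ and therefore $|V_B|=3$, and the color-reversed argument shows $V_B$ contains no blue edge; thus $V_R$ induces a blue triangle and $V_B$ a red triangle.

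The crux is then to re-anchor at the second-smallest vertex $2$, which lies in, and is the minimum of, $V_R\cup V_B$. By the symmetry exchanging the two colors together with $V_R\leftrightarrow V_B$, assume $2\in V_R$. Then $2$ is blue-joined to the other two vertices of $V_R$; if $2$ also sends a blue edge to some $x\in V_B$, the blue triangle $V_R$ together with the pendant $x$ at $2$ is a blue copy of $H$ with cut vertex $2$. Otherwise all three edges from $2$ to $V_B$ are red, so $2$ together with two vertices of the red triangle $V_B$ forms a red triangle with the remaining vertex of $V_B$ as a pendant at $2$ — again a copy of $H$ with cut vertex $2$. Either way $G$ contains $H$, a contradiction, so $R_<(H)\le 7$, and combined with the lower bound $R_<(H)=7$.

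The main obstacle I anticipate is bookkeeping rather than depth: every monochromatic $3$-pan built along the way must have its degree-$3$ vertex be the smallest of its four vertices, so each construction has to be arranged so that the cut vertex is either the global minimum $1$ (in the first half) or the minimum $2$ of $V_R\cup V_B$ (in the final step). The key idea is recognizing that once anchoring at $1$ forces the two monochromatic triangles, one should exploit them by re-anchoring at vertex $2$; the rest is routine case-checking.
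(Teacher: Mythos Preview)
Your proof is correct and follows essentially the same single-vertex anchoring argument as the paper: anchor at vertex $1$, rule out $|V_R|\ge 4$ via the red-edge/blue-$K_4$ dichotomy, and in the $|V_R|=|V_B|=3$ case re-anchor at vertex $2$. The only cosmetic differences are that you supply an explicit $K_{3,3}$-vs-$2K_3$ coloring for the lower bound (the paper simply cites $R(\text{3-pan})=7$), and in the final step you exhibit a red $3$-pan directly where the paper instead notes that $\{2\}\cup V_B$ is a red $K_4$.
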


\begin{proof}
Let $G$ be an ordered 2-coloring avoiding $H$. For the sake of contradiction, suppose that $|G| = 7$. Let $V_R$ be the set of vertices in $G$ connected to vertex 1 by red edges, and let $V_B$ be the set of vertices connected to vertex 1 by blue edges. Since $|G| = 1 + |V_R| + |V_B|$, we know that $|V_R| + |V_B| = 6$. Without loss of generality, $|V_R| \ge |V_B|$. Consider the following cases:
\begin{itemize}
\item $|V_R| > |V_B|$: Since $|V_R| + |V_B| = 6$, $V_R$ contains at least 4 vertices. If $V_R$ contains a red edge $(v_1,v_2)$, then for any vertex $v_3 \in V_R \setminus \{v_1,v_2\}$, vertices $v_3,1,v_1,v_2$ form a red copy of $H$, a contradiction. Otherwise, if $V_R$ contains only blue edges, then since $|V_R| \ge 4$, there exists a blue copy of $K_4$ and, therefore, a blue copy of $H$ in $V_R$, a contradiction.

\item $|V_R| = |V_B| = 3$: As before, if $V_R$ contains a red edge $(v_1,v_2)$, then for any vertex $v_3 \in V_R \setminus \{v_1,v_2\}$, vertices $v_3,1,v_1,v_2$ form a red copy of $H$, a contradiction. Thus, assume that $V_R$ contains only blue edges. Similarly, we can assume that $V_B$ contains only red edges. Without loss of generality, vertex 2 is in $V_R$ and $V_R = \{2,v_1,v_2\}$ for some $v_1, v_2$. Consider the 3 edges between vertex 2 and the vertices in $V_B$. If there exists a blue edge $(2,u)$, where $u \in V_B$, then vertices $u,2,v_1,v_2$ form a blue copy of $H$, a contradiction. Thus, all edges between vertex 2 and the vertices in $V_B$ are red. It follows that vertex 2 and the three vertices in $V_B$ form a red copy of $K_4$, a contradiction.
\end{itemize}

We have shown that all ordered 2-colorings on at least 7 vertices must contain a copy of $H$. Since the Ramsey number of the 3-pan is 7 (Page 391 of \cite{ChHa72ii}), $R_<(H) = 7$.
\end{proof}

\begin{theorem}
Let $H$ be a $(e_2,2)$-ordering of the 3-pan. Then $R_<(H) \le 10$.
\label{3pvb2}
\end{theorem}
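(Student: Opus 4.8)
The plan is to use two-vertex anchoring at the two globally smallest vertices, $1$ and $2$. The guiding principle is that $e_2$ carries order-label $2$, so it must occupy the second-smallest position in any copy of $H$; consequently, any copy of $H$ using both vertex $1$ and vertex $2$ of $G$ must use vertex $2$ as the hub $e_2$, with vertex $1$ serving either as the pendant $e_1$ or as one of the triangle-mates $e_3,e_4$. Suppose $G$ avoids $H$, let $n=|G|$, and without loss of generality let the edge $(1,2)$ be red. Partition the remaining $n-2$ vertices into $V_{RR},V_{RB},V_{BB},V_{BR}$ according to the colors of their edges to vertices $1$ and $2$, and set $A=V_{RR}\cup V_{BR}$ (the vertices red-adjacent to vertex $2$) and $B=V_{RB}\cup V_{BB}$ (those blue-adjacent to vertex $2$), so that $n=2+|A|+|B|$.

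First I would record the easy constraints. Two vertices $v_1<v_2$ of $V_{RR}$, together with vertices $1$ and $2$, form a red $3$-pan with hub $2$, triangle $\{1,2,v_1\}$, and pendant $v_2$; since vertex $2$ is second-smallest this is a red copy of $H$, so $|V_{RR}|\le 1$. A red edge inside $A$ together with vertices $1,2$ (now with vertex $1$ as pendant) likewise gives a red copy of $H$, so $A$ is a blue clique; since any monochromatic $K_4$ contains a monochromatic copy of $H$, blue cliques have size at most $3$, whence $|A|\le 3$. For $B$: every $v\in B$ has vertex $2$ as a blue neighbour below it, so if $v$ has two blue neighbours $y<z$ above itself, then blocking the possible blue copies of $H$ with hub $v$ forces $(2,y),(2,z),(y,z)$ all to be red --- that is, $y,z\in A$ and the edge $\{y,z\}$ is red. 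Hence each $v\in B$ has at most $|A|\le 3$ blue neighbours above it and none of them lies in $B$; in particular the blue edges of $B$ form a "branching", in which every vertex has at most one blue neighbour above it inside $B$. A parallel argument gives $|V_{RB}|\le 3$.

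What remains is the heart of the matter: pushing these observations to $|B|\le 4$, which yields $n=2+|A|+|B|\le 9$ and hence $R_<(H)\le 10$. The branching structure by itself is too weak --- a branching on $m$ vertices has an independent set of size at least $m/2$, and an independent set in $B$ is a red clique, so it only gives $|B|\le 6$ --- so I would additionally use that $B$ contains no monochromatic copy of $H$ at all: a red triangle inside $B$ whose middle vertex has a further red neighbour above it yields a red copy of $H$, and symmetrically for blue, which forces both the red and the blue subgraphs of $B$ to be highly restricted. I would finish with a short case analysis (on how the few blue "up-edges" of $B$ sit relative to the otherwise near-complete red graph on $B$) ruling out $|B|\ge 5$. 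The persistently delicate point is that the pendant $e_1$ carries no order-label, so each abstract copy of $H$ splits into three order-types according to whether $e_1$ falls below, between, or above the two triangle-mates, and all three must be excluded at every step.
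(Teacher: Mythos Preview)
Your setup is sound and the initial observations are correct, but the proposal has a genuine gap: the ``short case analysis'' meant to establish $|B|\le 4$ is precisely the heart of the argument, and you have not carried it out. The branching structure alone gives only $|B|\le 6$, and the further forbidden-$H$ constraints you gesture at do not obviously cut this to $4$; it is not even clear that $4$ (rather than $5$) is the correct target. You have also left two easy tightenings on the table. First, your own forcing argument (two blue up-neighbours $y,z$ of some $v\in B$ must land in $A$ with $(y,z)$ red) already contradicts your earlier fact that $A$ is a blue clique; hence every $v\in B$ has at most \emph{one} blue up-neighbour in all of $G\setminus\{1,2\}$, not ``at most $|A|$''. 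Second, $|A|\le 2$ rather than $3$: if $u<v<w$ lie in $A$ (a blue triangle), then either $(1,u)$ is red and $\{1,2,u,v\}$ is a red copy of $H$ with hub $2$, or $(1,u)$ is blue and $\{1,u,v,w\}$ is a blue copy with hub $u$. With $|A|\le 2$ the required bound relaxes to $|B|\le 5$. (The ``parallel argument'' for $|V_{RB}|\le 3$ is also not spelled out and is not obviously parallel to anything preceding it, though you never use it.)

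The paper does obtain the equivalent of $|B|\le 5$, but not via a global structural analysis of $B$. It anchors \emph{sequentially}: after bounding (the colour-swapped analogue of) your $A$ by $2$, it takes the smallest remaining vertex $v_1$ and notes that any two later vertices sharing $v_1$'s colour to vertex $2$ would, together with $2$ and $v_1$, form a copy of $H$ with hub $v_1$ --- so that neighbourhood has size at most $1$. It then removes those vertices and repeats the trick at the next smallest survivor $v_2$, bounding each of $v_2$'s two coloured up-neighbourhoods by $1$. This peels $B$ apart as $1+1+1+1+1=5$ and sidesteps the case analysis you deferred.
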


\begin{proof}
Let $G$ be an ordered 2-coloring avoiding $H$. Without loss of generality, $(1, 2)$ is blue.

Let $A$ be the set of vertices in $G \setminus \{1\}$ connected to vertex 2 by a blue edge. We will show that $A$ has size at most 2. First, observe that $A$ contains only red edges. Indeed, if $A$ contained a blue edge $(u,v)$, then the vertices 1, 2, $u$, and $v$ would form a blue copy of $H$, a contradiction. Suppose that $A$ contains at least 3 vertices $u < v < w$. Since $u$, $v$, and $w$ form a red triangle, in order for vertices 1, $u$, $v$, and $w$ not to form a red copy of $H$, the edge $(1,u)$ must be blue. However, this means that vertices $v$, 2, 1, and $u$ form a blue copy of $H$, a contradiction. Thus, $|A| \le 2$.

Let $G'$ comprise the vertices in $G \setminus (\{1, 2\} \cup A)$. Let $v_1$ be the smallest vertex in $G'$, and let $B$ be the subset of $G'$ connected to $v_1$ by red edges. Since all vertices in $B$ are connected to vertices 2 and $v_1$ by red edges and are all greater than 2 and $v_1$, any two vertices in $B$ can be combined with 2 and $v_1$ to yield a red copy of $H$. Thus, $|B| \le 1$.

Let $G''$ comprise the vertices in $G \setminus (\{1, 2, v_1\} \cup A \cup B)$. Let $v_2$ be the smallest vertex in $G''$. Let $C$ be the subset of $G''$ connected to $v_2$ by blue edges, and let $D$ be the subset of $G''$ connected to $v_2$ by red edges. Since $v_1$ and $v_2$ connect to vertices in $C$ only by blue edges, any two vertices in $C$ can be combined with $v_1$ and $v_2$ to yield a blue copy of $H$. Thus, $|C| \le 1$. Similarly, any two vertices in $D$ can be combined with 2 and $v_2$ to yield a red copy of $H$, forcing that $|D| \le 1$. Consequently,
$|G| = 4 + |A| + |B| + |C| + |D| \le 4 + 2 + 1 + 1 + 1 = 9$.
\end{proof}

\section{Infinite Families of Graphs}\label{sec:infinite}

In this section, we prove upper bounds on two infinite families of ordered Ramsey numbers. Theorem \ref{cycpath} says that for all $1$-orderings $H$ of $n$-vertex paths, $R_<(H) \in O(n)$. Theorem \ref{inffam} proves a more general result. For any graph $H$ containing a vertex $v$ with order-label $1$, Theorem \ref{inffam} bounds $R_<(K_n, H)$ in terms of $R_<(K_m, H \setminus \{v\})$ for $m \le n$. Consequently, Theorem \ref{inffam} can sometimes be used to reduce ordered Ramsey number upper bounds to classical Ramsey number results.

Our first theorem applies the parent-graph technique to any $1$-ordering of a path. This naturally extends our argument for $P_4$ (Theorem \ref{4p}).

\begin{theorem}
Let $H$ be a 1-ordering of the path $P_n$. Then $R_<(H) \le R(C_n)$, where $C_n$ is the $n$-cycle.
\label{cycpath}
\end{theorem}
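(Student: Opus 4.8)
The plan is to reuse the parent-graph technique from the proof of Theorem~\ref{4p}, but only its easy half: I will show that \emph{every} ordering of the $n$-cycle $C_n$ contains $H$ as an ordered subgraph. Granting this, the theorem is immediate. By definition of the classical Ramsey number, any ordered $2$-coloring $G$ on $R(C_n)$ vertices contains a monochromatic copy of $C_n$; equipped with the order-labels that $G$ assigns to its vertices, this copy is an ordering of $C_n$, and hence it contains a copy of $H$ that is both monochromatic and order-preserving. Thus $R_<(H)\le R(C_n)$.

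To prove the claim, write the vertices of $P_n$ along the path as $p_1-p_2-\cdots-p_n$, and say $H$ is the $(p_j,l)$-ordering for some $j,l\in\{1,\ldots,n\}$. Let $x_0,x_1,\ldots,x_{n-1}$ be the vertices of a given ordering of $C_n$ listed in cyclic order, with all index arithmetic taken modulo $n$, so that the edges of the cycle are exactly the pairs $x_tx_{t+1}$. Among $x_0,\ldots,x_{n-1}$, let $x_m$ be the vertex with the $l$-th smallest order-label. Delete the single cycle edge $x_{m-j}x_{m-j+1}$: what remains is the Hamiltonian path $x_{m-j+1}-x_{m-j+2}-\cdots-x_{m-j+n}$ on all $n$ vertices of the cycle. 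Under the isomorphism $p_t\mapsto x_{m-j+t}$ this is a copy of $P_n$ whose $j$-th vertex is $x_m$; since $x_m$ has the $l$-th smallest order-label among the $n$ vertices of the path, this copy meets the order-preserving requirement of the $(p_j,l)$-ordering, so it is a copy of $H$. If the original copy of $C_n$ is monochromatic, then so is every sub-path, and the copy of $H$ is monochromatic as well. This proves the claim.

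I do not expect a genuine obstacle here: this is precisely the ``cheap'' direction, in contrast to Theorem~\ref{4p}, where squeezing the value down to $5=R(P_4)<R(C_4)=6$ forced the extra casework on monochromatic triangles and on $C_5$. The only points requiring care are the index bookkeeping modulo $n$ when choosing which cycle edge to delete, and confirming that the degenerate small cases are consistent with the conventions on $C_n$ (e.g.\ $n=3$, where $C_3=K_3$ and any ordering of $P_3$ is obtained trivially). Note that the freedom to delete any edge of the cycle is exactly what lets us place $p_j$ at an arbitrary position along the resulting Hamiltonian path, which is why we never need to invoke the reflection symmetry of $P_n$ or read the path ``from the other end.''
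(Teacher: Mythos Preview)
Your proof is correct and follows essentially the same parent-graph argument as the paper: locate the vertex of the monochromatic $C_n$ whose order-label has rank $l$, then break the cycle at the appropriate edge so that this vertex sits in the $j$-th position of the resulting Hamiltonian path. Your write-up is in fact more explicit about the index arithmetic than the paper's one-paragraph version.
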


\begin{proof}
Let $G$ be an ordered 2-coloring consisting of $R(C_n)$ vertices. By definition, $G$ must contain a monochromatic copy of $C_n$. Let $m$ be the order-label of the ordered vertex in $H$. Consider the monochromatic $n$-path formed by the vertices of the copy of $C_n$ in which the $m$-th smallest vertex in the cycle has the same position as the ordered-vertex in $H$. Since $H$ only has 1 ordered vertex, this monochromatic $n$-path is a copy of $H$. It follows that $G$ contains $H$.
\end{proof}

Theorem \ref{cycpath} provides a linear upper bound on the ordered Ramsey numbers of 1-orderings of paths.

\begin{corollary}
For any 1-ordering $H$ of the path $P_n$, where $n \ge 5$, $R_<(H) \le 2n - 1$ for odd $n$, and $R_<(H) \le \frac{3}{2}n - 1$ for even $n$.
\end{corollary}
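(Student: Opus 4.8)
The plan is to combine Theorem~\ref{cycpath} with known values of the classical Ramsey numbers $R(C_n)$ of cycles. By Theorem~\ref{cycpath}, for any $1$-ordering $H$ of $P_n$ we have $R_<(H) \le R(C_n)$, so it suffices to plug in the value of $R(C_n)$. The relevant result is the theorem of Faudree and Schelp, and independently Rosta, which states that for $n \ge 6$ the diagonal Ramsey number of the $n$-cycle is $R(C_n) = 2n-1$ when $n$ is odd and $R(C_n) = \tfrac{3}{2}n - 1$ when $n$ is even; for $n = 5$ one has $R(C_5) = 9 = 2\cdot 5 - 1$, which is consistent with the odd-$n$ formula. (The even cases $n=6$ and the general even case also conform to $\tfrac{3}{2}n-1$, noting that $\tfrac{3}{2}n-1$ is an integer precisely because $n$ is even.)

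First I would invoke Theorem~\ref{cycpath} to reduce the corollary to a statement purely about $R(C_n)$. Then I would cite the Faudree--Schelp/Rosta determination of $R(C_n)$ for $n \ge 5$: explicitly, $R(C_n) = 2n-1$ for odd $n \ge 5$ and $R(C_n) = \tfrac{3}{2}n-1$ for even $n \ge 6$. Substituting these two cases into the inequality $R_<(H) \le R(C_n)$ yields exactly the two claimed bounds $R_<(H) \le 2n-1$ (odd $n$) and $R_<(H) \le \tfrac{3}{2}n-1$ (even $n$), completing the proof. The only minor bookkeeping point is to confirm that the hypothesis $n \ge 5$ suffices: the even-$n$ formula is usually stated for $n \ge 6$, but since the corollary's even case is vacuous at $n = 5$, requiring $n \ge 5$ is harmless, and if one wants $n=5$ covered at all it falls under the odd case where $R(C_5)=9$ is classical.

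I do not anticipate a genuine obstacle here, since all the work has already been done: Theorem~\ref{cycpath} does the ordered-to-unordered reduction, and the cycle Ramsey numbers are a well-known closed-form result. The one thing to be careful about is citing the correct reference for $R(C_n)$ and stating the parity-dependent formula accurately (in particular getting the even case $\tfrac{3}{2}n-1$ rather than an off-by-one variant, and noting it is an integer only for even $n$). If desired, one could instead cite the dynamic survey of Radziszowski for these values. The proof itself is then a single substitution.

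\begin{proof}
By Theorem~\ref{cycpath}, $R_<(H) \le R(C_n)$. It is known (Faudree--Schelp, and independently Rosta; see also \cite{Radziszowski14}) that for $n \ge 5$,
\[
R(C_n) = \begin{cases} 2n - 1 & \text{if } n \text{ is odd},\\[1mm] \tfrac{3}{2}n - 1 & \text{if } n \text{ is even}.\end{cases}
\]
Substituting into the inequality $R_<(H) \le R(C_n)$ gives $R_<(H) \le 2n-1$ for odd $n$ and $R_<(H) \le \tfrac{3}{2}n - 1$ for even $n$, as claimed. (Note that $\tfrac{3}{2}n - 1$ is an integer exactly when $n$ is even.)
\end{proof}
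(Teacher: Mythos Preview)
Your proof is correct and follows essentially the same approach as the paper: invoke Theorem~\ref{cycpath} to reduce to $R(C_n)$, then substitute the known closed-form values of the cycle Ramsey numbers. The only cosmetic difference is in citations (the paper cites \cite{BoEr73, FaSc74} rather than Rosta/Radziszowski) and in phrasing $R(C_n)+1=2n$ versus $R(C_n)=2n-1$.
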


\begin{proof}
Since $R(C_n) + 1$ is $2n$ for odd $n>4$ and $\frac{3}{2}n$ for even $n > 4$ \cite{BoEr73, FaSc74},  Theorem \ref{cycpath} suffices.
\end{proof}

Next, we present upper bounds for a more general family of ordered Ramsey numbers. In particular, the following bound can be applied to $R_<(K_n, H)$ for any $k$-ordered graph $H$ containing some vertex with order-label $1$. The proof of our bound relies on a single-vertex anchoring argument.

\begin{definition}
Let $H$ be a $k$-ordered graph for any $k \ge 0$. Define \emph{$H^+$} as the $(k+1)$-ordered graph with vertex set $H \cup {v}$, where $v$ has order-label 1 and all order-labels in $H$ are increased 1, and with edge set $H \cup \{(v,u):u\in H\}$.
\end{definition}

\begin{theorem}
For any $k$-ordered graph $H$, we have $R_<(K_n,H^+) \le \sum_{i = 1}^n R_<(K_i,H) - n + 1.$
\label{inffam}
\end{theorem}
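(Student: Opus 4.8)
The plan is to run a single-vertex anchoring argument on the smallest vertex of an ordered $2$-coloring $G$ and recursively peel off vertices. Set $N = \sum_{i=1}^n R_<(K_i, H) - n + 1$ and let $G$ be an ordered $2$-coloring on $N$ vertices; we want to show $G$ contains a red $K_n$ or a blue copy of $H^+$. Let $w$ be the smallest vertex of $G$ (the vertex with order-label $1$), and let $X$ be the set of vertices joined to $w$ by a red edge and $Y$ the set joined to $w$ by a blue edge, so $|X| + |Y| = N - 1$.

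First I would handle the blue side. Because $w$ is the smallest vertex of $G$, every vertex of $Y$ has a larger order-label than $w$; so if the ordered $2$-coloring induced on $Y$ contains a blue copy of $H$, then together with $w$ (which is blue-joined to all of $Y$ and sits in the order-label-$1$ position) we obtain a blue copy of $H^+$. Hence if $|Y| \ge R_<(K_{?}, H)$ in the appropriate sense we are done — more precisely, the induced coloring on $Y$ either contains a blue copy of $H$ (done) or contains a red $K_{|Y| \text{-threshold}}$. The cleaner way to organize this: if $|Y| \ge R_<(K_1, H) = $ (the least $m$ with every coloring on $m$ vertices having a red $K_1$ or blue $H$)$ = |H| \cdot$-ish threshold, then $Y$ yields either a blue $H^+$ or a red $K_1$; but a red $K_1$ is trivial, so I actually want the stronger statement: I will feed red cliques found inside $Y$ back through the recursion.

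The key recursive step is the following. Suppose $G$ contains a red clique $K$ on $j$ vertices for some $j \ge 1$. Look at the common red-neighborhood argument differently: instead, induct on $n$. The base case $n = 1$ reads $R_<(K_1, H^+) \le R_<(K_1, H)$, which holds since $H^+ \supseteq$ a single vertex and $K_1$ is a single vertex — in fact $R_<(K_1, H^+) = 1 \le R_<(K_1,H)$. For the inductive step, take $G$ on $N = \sum_{i=1}^n R_<(K_i,H) - n + 1$ vertices, let $w$ be its smallest vertex with red-set $X$ and blue-set $Y$. If $|Y| \ge R_<(K_n, H)$... no: the point is that $|X| \ge \sum_{i=1}^{n-1} R_<(K_i, H) - (n-1) + 1 = N - R_<(K_n,H)$ OR $|Y| \ge R_<(K_n,H)$, since $|X| + |Y| = N-1$ and $(N - R_<(K_n,H)) + R_<(K_n, H) = N$. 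In the second case, the induced coloring on $Y$ contains a blue copy of $H$ (giving blue $H^+$ with $w$) or a red $K_n$, and we are done. In the first case, $X$ has at least $\sum_{i=1}^{n-1} R_<(K_i,H) - (n-1) + 1$ vertices, so by the inductive hypothesis the coloring induced on $X$ contains a red $K_{n-1}$ or a blue $H^+$; in the latter case we are done, and in the former, that red $K_{n-1}$ together with $w$ (which is red-joined to all of $X$) forms a red $K_n$. In every case $G$ contains a red $K_n$ or a blue $H^+$, completing the induction.

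\textbf{Main obstacle.} The arithmetic of the split $|X| + |Y| = N - 1$ against the two thresholds $N - R_<(K_n,H)$ and $R_<(K_n,H)$ must land exactly, and one has to be careful that the induced coloring on $X$ (resp.\ $Y$) inherits a legitimate ordered $2$-coloring structure and that order-labels are consistent — in particular that the blue copy of $H$ found in $Y$, which sits entirely above $w$, really does extend to $H^+$ because $v$ has order-label $1$ in $H^+$ and $w$ is the global minimum. I expect verifying this order-compatibility, and pinning down the base case $R_<(K_1,H)$ cleanly, to be the only delicate points; the clique-extension step is the standard single-vertex anchoring move.
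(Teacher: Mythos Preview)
Your approach is exactly the paper's: induct on $n$, anchor at the smallest vertex $w$, partition the remaining vertices into the red-neighborhood $X$ and blue-neighborhood $Y$, apply the inductive hypothesis to $X$ (extending a red $K_{n-1}$ to a red $K_n$ via $w$) and the definition of $R_<(K_n,H)$ to $Y$ (extending a blue $H$ to a blue $H^+$ via $w$). The paper phrases it in contrapositive form---bounding $|G|$ for an avoiding coloring rather than taking $|G|=N$ and forcing a pattern---but the logic is identical, and your order-compatibility observation (that $w$ is the global minimum, so a blue $H$ in $Y$ really does extend to $H^+$) is the only point beyond standard single-vertex anchoring.

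One caution, which you yourself flagged as the main obstacle: the displayed equality
\[
\textstyle\sum_{i=1}^{n-1}R_<(K_i,H)-(n-1)+1 \;=\; N - R_<(K_n,H)
\]
is off by one. With $N=\sum_{i=1}^n R_<(K_i,H)-n+1$ one gets $N-R_<(K_n,H)=\sum_{i=1}^{n-1}R_<(K_i,H)-n+1$, which is one short of the inductive threshold $\sum_{i=1}^{n-1}R_<(K_i,H)-(n-1)+1$; so the pigeonhole split does \emph{not} land exactly. The paper's own proof has the mirror image of this slip: it shows that every avoiding $G$ satisfies $|G|\le N$ and then concludes $R_<\le N$, whereas everywhere else in the paper the convention ``$|G|\le M$ for all avoiding $G$ implies $R_<\le M+1$'' is used. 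So your write-up reproduces the paper's argument faithfully, including this arithmetic glitch.
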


\begin{proof}
As a base case, suppose that $n = 1$. Then $R_<(K_1,H^+) = 1$, which is $\le R_<(K_1,H)$, as claimed. 
Proceed by induction on $n \ge 2$, and suppose that $R_<(K_{n - 1},H^+) \le \sum_{i = 1}^{n - 1} R_<(K_i,H) - n + 2$. Let $G$ be an ordered 2-coloring that does not contain a red copy of $K_n$ or a blue copy of $H^+$. Let $A$ be the set of vertices in $H$ connected to vertex 1 by red edges, and let $B$ be the set of vertices in $H$ connected to vertex 1 by blue edges. Note that $A$ cannot contain a red copy of $K_{n - 1}$ or a blue copy of $H^+$ and that $B$ cannot contain a red copy of $K_n$ or a blue copy of $H$. This means that $|A| \le R_<(K_{n - 1}, H^+) - 1 \le \sum_{i = 1}^{n - 1} R_<(K_i,H) - n + 1$, by our inductive hypothesis, and that $|B| \le R_<(K_n,H) - 1$. Consequently,
$|G| = |A| + |B| + 1
\le \sum_{i = 1}^n R_<(K_i,H) - n + 1$, as claimed.
\end{proof}

The following corollary uses Theorem \ref{inffam} to generalize our bound on the $(d_2,1)$-ordering of the diamond graph, which can be regarded simply as $P_3^+$. Note that this corollary gives an upper bound for $R_<(P_n^+)$ since $R_<(P_n^+) \le R_<(K_{n + 1},P_n^+) \le \frac{1}{2}(n^3 - n) + 1$.

\begin{corollary}
For positive integers $m,n \ge 2$, $R_<(K_m,P_n^+) \le (n - 1)m(m - 1)/2 + 1$.
\label{corPplus}
\end{corollary}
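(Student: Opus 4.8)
The plan is to apply Theorem \ref{inffam} with $H = P_{n-1}$, so that $H^+ = P_{n-1}^+ = P_n^+$ (the graph obtained from $P_{n-1}$ by adjoining a new vertex, labeled $1$, adjacent to all vertices of $P_{n-1}$). Theorem \ref{inffam} then gives
\[
R_<(K_m, P_n^+) \le \sum_{i=1}^{m} R_<(K_i, P_{n-1}) - m + 1.
\]
So the task reduces to obtaining a good upper bound on the classical (unordered) Ramsey numbers $R(K_i, P_{n-1}) = R_<(K_i, P_{n-1})$ — here the path $P_{n-1}$ carries no order-labels, so the ordered Ramsey number coincides with the classical one. The classical result of Chvátal–Harary / Parsons on $R(K_m, P_n)$ is exactly what is needed: $R(K_s, P_t) = (s-1)(t-1) + 1$. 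Applying this with $s = i$ and $t = n-1$ yields $R_<(K_i, P_{n-1}) = (i-1)(n-2) + 1$.

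Next I would substitute this into the sum:
\[
\sum_{i=1}^{m} R_<(K_i, P_{n-1}) - m + 1 = \sum_{i=1}^{m}\bigl[(i-1)(n-2) + 1\bigr] - m + 1 = (n-2)\sum_{i=1}^{m}(i-1) + m - m + 1 = (n-2)\binom{m}{2} + 1.
\]
This gives the bound $R_<(K_m, P_n^+) \le (n-2)m(m-1)/2 + 1$, which is slightly stronger than the claimed $(n-1)m(m-1)/2 + 1$; since $(n-2) \le (n-1)$ for $n \ge 2$, the stated inequality follows a fortiori. (One should double-check the small edge case $n = 2$, where $P_{n-1} = P_1$ is a single vertex and the formula $R(K_i, P_1) = 1$ is consistent with $(i-1)(n-2)+1 = 1$; likewise $m = 2$ is immediate.)

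The only genuine input beyond Theorem \ref{inffam} is the exact value of the path-versus-complete-graph Ramsey number $R(K_s, P_t) = (s-1)(t-1)+1$, which is classical and citable (it is the $T$-versus-$K_m$ case of the Chvátal bound, specialized to paths). So there is no real obstacle: the proof is just the substitution above plus a one-line arithmetic simplification of an arithmetic series, together with the observation that $R_<(K_i, P_{n-1})$ equals the classical Ramsey number because $P_{n-1}$ has no order-labels. I would present it in exactly that order: invoke Theorem \ref{inffam}, cite the value of $R(K_i, P_{n-1})$, evaluate the sum, and compare with the claimed bound.
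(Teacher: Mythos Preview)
Your overall strategy---apply Theorem \ref{inffam} together with Chv\'atal's formula $R(K_s,\text{tree on }t\text{ vertices})=(s-1)(t-1)+1$ and sum an arithmetic series---is exactly the paper's approach. However, there is an indexing error that invalidates the argument as written.

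You set $H=P_{n-1}$ and assert $P_{n-1}^{+}=P_n^{+}$. This is false: by the definition preceding Theorem \ref{inffam}, $H^{+}$ adjoins one new vertex (with order-label $1$) adjacent to all of $H$, so $P_{n-1}^{+}$ has $n$ vertices while $P_n^{+}$ has $n+1$ vertices. (Concretely, the paper notes that the diamond graph on four vertices is $P_3^{+}$, confirming that $P_k^{+}$ has $k+1$ vertices.) Thus what your computation actually bounds is $R_<(K_m,P_{n-1}^{+})$, not $R_<(K_m,P_n^{+})$; after reindexing, your inequality $R_<(K_m,P_{n-1}^{+})\le (n-2)\tbinom{m}{2}+1$ is precisely the corollary's bound with $n$ shifted by one, not a strengthening of it.

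The fix is immediate: take $H=P_n$ in Theorem \ref{inffam}, so that $H^{+}=P_n^{+}$. Then $R_<(K_i,P_n)=R(K_i,P_n)=(i-1)(n-1)+1$ by Chv\'atal, and
\[
R_<(K_m,P_n^{+})\;\le\;\sum_{i=1}^{m}\bigl[(i-1)(n-1)+1\bigr]-m+1
\;=\;(n-1)\binom{m}{2}+1,
\]
which is exactly the stated bound (so no ``a fortiori'' step is needed). With this correction your proof coincides with the paper's.
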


\begin{proof}
It is well known that the Ramsey number of a complete graph on $m$ vertices and a tree on $n$ vertices is $(m - 1)(n - 1) + 1$ \cite{Chvatal77}. Since $P_n$ is a tree, combining this formula with Theorem \ref{inffam} yields
\begin{equation*}
    \begin{split}
R_<(K_m,P_n^+) & \le \sum_{i = 1}^m R_<(K_i,P_n) - n + 1 \\
& = \sum_{i = 1}^m ((i - 1)(n - 1) + 1) - n + 1 \\
& = (n - 1)\sum_{i = 0}^{m - 1} i + m - n + 1 \\
& = (n - 1)\frac{m(m - 1)}{2} + 1.
\end{split}
\end{equation*}
\end{proof}

Note that Theorem \ref{inffam} is not restricted to bounds for $1$-orderings. For example, one can generalize Corollary \ref{corPplus} by applying Theorem \ref{inffam} repeatedly to $P_n^+$ in order to establish that $R_<(K_m, P_n^{++\cdots +}) \in O(m^{1 + j}n)$, where the $^+$ operator is iterated $j$ times.

\section{Conclusion and Future Work}\label{sec:conc}

In this paper, we established upper bounds for ordered Ramsey numbers of all $1$-orderings on 4 vertices. Additionally, we found upper bounds for several infinite families of orderings, including an upper bound for $R_<(K_n, H^+)$ for all $k$-orderings $H$ with a vertex of order-label $1$ (Theorem \ref{inffam}). Our proofs suggest several new paradigms for finding upper bounds for ordered Ramsey numbers of small graphs. In particular, the following two ideas have proven to be effective: (1) exploiting the existance of an unordered graph that is guaranteed to contain a copy of our ordered graph, and (2) analysing 2-colorings from the perspective of two anchor vertices.

Since our research so far focuses almost exclusively on upper bounds on ordered Ramsey numbers, one important direction of future research will be to find lower bounds. In some cases, finding lower bounds may not be difficult. For example, our upper bounds for the $1$-orderings of the 4-path are trivially tight, since they match the unordered Ramsey number. In other cases, classical Ramsey number lower bounds \cite{BoEr73, ChHa72ii, Exoo89, FaSc74, Spencer75} may be easily adapted to the ordered context.

Finding a lower bound on a Ramsey number of a graph entails finding a construction of a 2-coloring avoiding the graph. Due to the computational complexity of examining all 2-colorings of $K_n$, of which there are $2^{\Theta(n^2)}$, this construction usually cannot be found solely by means of a brute-force search with a computer program. In some cases, however, it may be possible to find a construction with assistance from a computer program. For example, as a preliminary result, we have worked to find a lower bound for $R_<(DG)$, where $DG$ is the ordering of the diamond graph assigning values $1, 2, 3, 4$ to vertices $d_1, d_2, d_3, d_4$, respectively. By constructing a skeleton for a lower bound construction based on the proof of Theorem \ref{dgva1}, and then filling in the remaining edges by means of brute force, we are able to show that $R_<(DG) \ge 12$. Figure \ref{figlower} provides an example of a 2-coloring which establishes this bound. Interestingly, we were able to generate 25536 constructions for our lower bound, indicating that our result is likely not tight.

Additionally, the following directions of future work seem interesting.
\begin{enumerate}
\item Since ordered Ramsey numbers are always at least as large as Ramsey numbers, it is surprising that $R_<(G) = R(G)$ when $G$ is any $1$-ordering of the $4$-path (Theorem \ref{4p}), as well as for three of the 1-orderings of the 4-star (Theorem \ref{4sva1}, Theorem \ref{4sva2}, and Theorem 12 of \cite{BCKK13}). Can one find an infinite family of ordered graphs (besides graphs such as $K_n$ and $C_n$ which contain as subgraphs all of their $1$-orderings) for which this is the case?
\item Our results focus on graphs with a single component. Given a graph $G$ with components $G_1, \ldots, G_k$, let $G'$ be a $1$-ordering of $G$. What can we say about $R_<(G')$ in terms of the ordered Ramsey numbers of orderings of $G_1, \ldots, G_k$?
\item Theorem \ref{cycpath} extends our upper bounds for ordered Ramsey numbers of 1-orderings of $P_4$ to upper bounds for 1-orderings of $P_n$. Are there other 4-vertex graphs for which the upper bounds can be extended to consider an infinite family of 1-orderings? 
\end{enumerate}

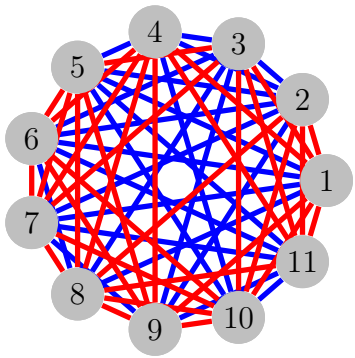
\begin{figure}[h]
\begin{center}
\begin{tikzpicture}
\tikzstyle{vertex}=[circle,fill=black!25,minimum size=20pt,inner sep=0pt];
\tikzstyle{edge} = [draw,line width = 2pt];
\foreach \pos/\name in {{(2,0)/1},{(1.68,1.08)/2},{(.83,1.82)/3},{(-.28,1.98)/4},{(-1.31,1.51)/5},{(-1.92,.56)/6},{(1.68,-1.08)/11},{(.83,-1.82)/10},{(-.28,-1.98)/9},{(-1.31,-1.51)/8},{(-1.92,-.56)/7}}, \node[vertex] (\name) at \pos {$\name$};
\foreach \source/\dest in {3/2,4/2,4/3,5/1,5/2,5/4,6/1,6/2,6/3,7/1,7/2,8/1,8/3,8/6,9/2,9/3,9/7,10/4,10/5,11/5,11/6,11/7,11/9,11/10} \path[edge][blue](\source) -- node[] {$$} (\dest);
\foreach \source/\dest in {2/1,3/1,4/1,5/3,6/4,6/5,7/3,7/4,7/5,7/6,8/2,8/4,8/5,8/7,9/1,9/4,9/5,9/6,9/8,10/1,10/2,10/3,10/6,10/7,10/8,10/9,11/1,11/2,11/3,11/4,11/8} \path[edge][red](\source) -- node[] {$$} (\dest);
\foreach \pos/\name in {{(2,0)/1},{(1.68,1.08)/2},{(.83,1.82)/3},{(-.28,1.98)/4},{(-1.31,1.51)/5},{(-1.92,.56)/6},{(1.68,-1.08)/11},{(.83,-1.82)/10},{(-.28,-1.98)/9},{(-1.31,-1.51)/8},{(-1.92,-.56)/7}}, \node[vertex] (\name) at \pos {$\name$};
\end{tikzpicture}
\caption{An ordered 2-coloring on 11 vertices which avoids DG.}
\label{figlower}
\end{center}
\end{figure}

\section{Acknowledgements}

The author would like to thank the MIT PRIMES program for providing him with the resources to conduct this research project. The author thanks William Kuszmaul for many useful conversations throughout the project, as well as for suggesting edits to the paper. His help was invaluable. The author also thanks Prof. Jacob Fox for suggesting an initial direction of research.

\clearpage

\pagestyle{empty}\singlespace

\end{document}